\theoremstyle{plain}
\newtheorem{theorem}{Theorem}[section]
\newtheorem*{fact}{Fact}
\newtheorem*{theoremA}{Theorem~A}
\newtheorem*{corollaryB}{Corollary~B}
\newtheorem*{corollaryC}{Corollary~C}
\newtheorem{lemma}[theorem]{Lemma}
\newtheorem{proposition-definition}{Proposition-Definition}
\newtheorem{corollary}[theorem]{Corollary}
\newtheorem{claim}[theorem]{Claim}
\theoremstyle{definition}
\theoremstyle{remark}
\newtheorem{remark}[theorem]{Remark}
\newcommand{\ZZ}{\mathbb{Z}}
\newcommand{\QQ}{\mathbb{Q}}
\newcommand{\RR}{\mathbb{R}}
\newcommand{\CC}{\mathbb{C}}
\newcommand{\Affsp}{\mathbb{A}}
\newcommand{\PP}{\mathbb{P}}
\newcommand{\esssup}{\mathop{\operator@font ess.sup}\displaylimits}
\newcommand{\essinf}{\mathop{\operator@font ess.inf}\displaylimits}
\newcommand{\trdeg}{\mathop{\operator@font tr.deg}\displaylimits}
\let\@@pmod\pmod 
\DeclareRobustCommand{\pmod}{\@ifstar\@pmods\@@pmod} 
\def\@pmods#1{\mkern4mu({\operator@font mod}\mkern 6mu#1)} 
\renewcommand{\div}{\mathop{\mathrm{div}}\nolimits}
\newcommand{\Image}{\mathop{\mathrm{Image}}\nolimits}
\renewcommand{\leq}{\leqslant}
\renewcommand{\geq}{\geqslant}
\DeclareMathOperator{\Hz}{H^0}
\DeclareMathOperator{\Spec}{Spec}
\DeclareMathOperator{\Card}{Card}
\DeclareMathOperator{\Bs}{Bs}
\DeclareMathOperator{\SBs}{SBs}
\DeclareMathOperator{\Sing}{Sing}
\DeclareMathOperator{\rk}{rk}
\newcommand{\id}{{\rm id}}
\newcommand{\pr}{{\rm pr}}
\newcommand{\chr}{{\rm char}}
\newcommand{\sbullet}{{\scriptscriptstyle\bullet}}
\title{A Bertini-type theorem for free arithmetic linear series}
\author{Hideaki Ikoma}
\date{November 23, 2013. Version 2.0.}
\thanks{This research is supported by Research Fellow of Japan Society for the Promotion of Science.}
\address{Graduate School of Mathematical Sciences, The University of Tokyo, Tokyo, 153-8914, Japan}
\email{ikoma@ms.u-tokyo.ac.jp}
\subjclass{Primary 14G40; Secondary 11G50, 37P30}
\begin{document}

\begin{abstract}
In this paper, we prove a version of the arithmetic Bertini theorem asserting that there exists a strictly small and generically smooth section of a given arithmetically free graded arithmetic linear series.
\end{abstract}

\maketitle
\tableofcontents

\setcounter{section}{-1}
\section{Introduction}

When we generalize results on arithmetic surfaces to those on higher-dimensional arithmetic varieties, it is sometimes very useful to cut the base scheme by a ``good'' global section $s$ of a given Hermitian line bundle and proceed to induction on dimension.
To do this, we have in the context of Arakelov geometry the following result.

\begin{fact}[\text{\cite[Theorems~4.2 and 5.3]{Moriwaki95}}]
Let $\overline{A}$ be a $C^{\infty}$-Hermitian line bundle on a generically smooth projective arithmetic variety $X$, and let $x_1,\dots,x_q$ be points (not necessarily closed) on $X$.
Suppose that (i) $A$ is ample, (ii) $c_1(\overline{A})$ is positive definite, and (iii) $\Hz(X,mA)$ has a $\ZZ$-basis consisting of sections with supremum norms less than $1$ for every $m\gg 1$.
Then there exist a sufficiently large integer $m\geq 1$ and a nonzero section $s\in\Hz(X,mA)$ such that
\begin{enumerate}
\item[\textup{(1)}] $\div(s)_{\QQ}$ is smooth over $\QQ$,
\item[\textup{(2)}] $s(x_i)\neq 0$ for every $i$, and
\item[\textup{(3)}] $\|s\|_{\sup}<1$.
\end{enumerate}
\end{fact}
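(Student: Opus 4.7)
The plan is to combine the classical Bertini theorem over $\QQ$ with a lattice-point count in $H^0(X,mA)$ that exploits assumption~(iii). The heuristic: for $m \gg 1$ the number of integral sections of $mA$ with $\|\cdot\|_{\sup}<1$ grows like $\exp(c m^{\dim X})$ by (i), (ii), (iii), while the number of such sections that fail (1) or (2) grows strictly slower because the corresponding ``bad locus'' is a proper Zariski-closed subset of $\Affsp(H^0(X,mA)_{\QQ})$.

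First I would take $m$ large enough that $mA$ is very ample on $X_{\QQ}$ and that $|mA|$ separates $x_1,\dots,x_l$. Classical Bertini then identifies condition (1) with the complement of a proper Zariski-closed subset $Z_1$ (essentially the cone over the dual variety of $X_{\QQ}$, together with sections vanishing on some component of $\Sing(X_{\QQ})$), and condition (2) with the complement of a finite union of linear hyperplanes $Z_2=\bigcup_i\{s:s(x_i)=0\}$. Setting $Z=Z_1\cup Z_2$, we obtain a proper Zariski-closed subset of $\Affsp(H^0(X,mA)_{\QQ})$.

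Second I would count. Using (iii), fix a $\ZZ$-basis $e_1,\dots,e_N$ of $H^0(X,mA)$ with $\max_i\|e_i\|_{\sup}<1$; then every integer combination $\sum n_i e_i$ with sufficiently small $|n_i|$ automatically satisfies $\|\cdot\|_{\sup}<1$. By the arithmetic Hilbert--Samuel theorem, (i) and (ii) guarantee $\avol(\overline{A})>0$, whence the number of such lattice points is at least $\exp(c m^{\dim X})$ for some $c>0$. Conversely, the number of integral sections of sup-norm $<1$ lying in $Z$ is bounded by $\exp(c' m^{\dim X})$ with $c'<c$, via a slope inequality or arithmetic B\'ezout estimate applied to each irreducible component of $Z$. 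The difference is then positive, producing the desired $s$.

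The main obstacle is precisely the strict gap $c'<c$: one must show that restricting an arithmetically free linear series to a proper Zariski-closed subset strictly decreases arithmetic volume. This uses positive-definiteness of $c_1(\overline{A})$ from (ii) through a Hodge-index or arithmetic Siu-type inequality, and is the most delicate analytic input. The strict inequality $\|s\|_{\sup}<1$ (rather than $\leq 1$) then comes almost for free, since (iii) always leaves room to perturb a candidate integral section by small-norm combinations of the $(e_i)$ without destroying smoothness of its divisor or the nonvanishing at the $x_i$.
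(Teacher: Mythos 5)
Your overall strategy---find a small lattice point in $H^0(X,mA)$ avoiding a proper ``bad'' subvariety of the section space---is the right idea and is what the paper does, but the implementation you sketch has two genuine gaps.

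First, classical Bertini only tells you the bad locus $Z_1\subseteq\Affsp(H^0(X,mA)_\QQ)$ is a proper Zariski-closed subset; it says nothing about its degree as $m$ varies. The heart of the paper is Theorem~\ref{thm:degBertini} and Corollary~\ref{cor:Bertini}, an effective Bertini theorem showing that $Z_1$ is contained in a hypersurface whose degree is bounded by a \emph{polynomial} $P(m)$ (the proof uses elimination theory and an effective Nullstellensatz of Jelonek, not arithmetic intersection theory). Without such a bound, no counting or Schwartz--Zippel argument can close: a degree-$d$ hypersurface in $\ZZ^N$ can miss the lattice only once the coefficient box has side length exceeding $d$, and you need to know $d=P(m)$ grows slowly enough that the resulting section still has sup-norm $<1$. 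The paper then applies Alon's Combinatorial Nullstellensatz (Lemma~\ref{lem:CN}), which is precisely the quantitative lattice-avoidance statement your paragraph gestures at, together with the Zhang--Moriwaki lemma (Lemma~\ref{lem:ZhangMoriwaki}) to get basis vectors whose norms decay like $Cm^{\cdot}Q^m$ with $Q<1$, so that the polynomial factor $P(m)$ from Combinatorial Nullstellensatz is overwhelmed. Your appeals to arithmetic Hilbert--Samuel, slope inequalities, arithmetic B\'ezout and Siu/Hodge-index are misdirected here: those control arithmetic intersection numbers of cycles on $X$, not the density of lattice points inside a hypersurface in the affine space $\Affsp(H^0(X,mA))$, and they play no role in the paper's argument.

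Second, the claimed strict gap $c'<c$ in exponential growth rates is false for the linear conditions at points $x_i$ of positive residue characteristic $p$: the set $\{s\in H^0(X,mA)\,:\,s(x_i)=0\}$ contains the full sublattice $pH^0(X,mA)$ plus a coset structure, hence a \emph{fixed positive fraction} of all lattice points, uniformly in $m$. The conclusion (that some lattice point avoids the bad set) can survive this, but not via a ``strictly smaller exponential rate'' argument. The paper isolates this issue in Lemma~\ref{lem:poschr} and deals with it by first choosing a congruence class modulo $F=\prod_{p\mid\chr k(y_i)}p$ in which nonvanishing at the $y_i$ is automatic, and only then running Combinatorial Nullstellensatz inside that coset. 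You would need an analogous two-step reduction.
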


For example, this technique plays essential roles in the proofs of the arithmetic Bogomolov-Gieseker inequality on high-dimensional arithmetic varieties (see \cite{Moriwaki95}), of the arithmetic Hodge index theorem in codimension $1$ (see \cite{Moriwaki96}, \cite{Yuan_Zhang13}), of the arithmetic Siu inequality of Yuan (see \cite{Yuan07}), and so on.
A purpose of this paper is to give a simple elementary proof of the above fact and to strengthen it to the case of arithmetically free graded arithmetic linear series.

Let $K$ be a number field.
Let $X$ be a projective arithmetic variety that is geometrically irreducible over $\Spec(O_K)$, and let $L$ be an effective line bundle on $X$.
A \emph{graded linear series} belonging to $L$ is a subgraded $O_K$-algebra
\[
 R_{\sbullet}:=\bigoplus_{m\geq 0}R_m\subseteq \bigoplus_{m\geq 0}\Hz(X,mL).
\]
We consider norms $\|\cdot\|_m$ on $R_m\otimes_{\ZZ}\RR$, and assume that the family of norms $\|\cdot\|_{\sbullet}:=(\|\cdot\|_m)_{m\geq 0}$ is \emph{multiplicative}, that is,
\[
 \|s\otimes t\|_{m+n}\leq\|s\|_m\|t\|_n
\]
holds for every $s\in R_m$ and $t\in R_n$.

\begin{theoremA}
Let $X$ be a generically smooth projective arithmetic variety, and let $A$ be an effective line bundle on $X$.
We consider a graded linear series
\[
 R_{\sbullet}:=\bigoplus_{m\geq 0}R_m
\]
belonging to $A$ and a multiplicative norm $\|\cdot\|_{\sbullet}$ on $R_{\sbullet}\otimes_{\ZZ}\RR$.
Suppose the following conditions.
\begin{itemize}
\item $R_1$ is base point free,
\item $R_{\sbullet}\otimes_{\ZZ}\QQ$ is generated by $R_1$ over $\QQ$, and
\item $\bigcap_{m\geq 1}\{x\in X_{\QQ}\,|\,\text{$t(x)=0$ for every $t\in R_m$ with $\|t\|_m<1$}\}=\emptyset$.
\end{itemize}
Let $Y^1,\,\dots,\,Y^p$ be smooth closed subvarieties of the complex manifold $X(\CC)$, and let $x_1,\dots,x_q$ be points (not necessarily closed) on $X$.
Then, for every sufficiently large integer $m\gg 1$, there exists a nonzero section $s\in R_m$ such that
\begin{enumerate}
\item[\textup{(1)}] $\div(s|_{Y^1}),\,\dots,\,\div(s|_{Y^p})$ are all smooth,
\item[\textup{(2)}] $s(x_i)\neq 0$ for every $i$, and
\item[\textup{(3)}] $\|s\|_m<1$.
\end{enumerate}
\end{theoremA}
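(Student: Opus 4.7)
The plan is to combine a classical analytic Bertini genericity argument with an arithmetic smallness argument derived from the multiplicativity of $\|\cdot\|_{\sbullet}$. I shall place the classical Bertini bad locus as a proper algebraic subset of an auxiliary finite-dimensional complex vector space, while multiplicativity supplies an exponentially decaying family of small lattice vectors in that same space. The construction is carried out at a large multiple $m=m_0\ell$ of a judiciously chosen degree $m_0$.

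First, using hypothesis (c) and the Noetherian compactness of $X_{\QQ}$, I extract an integer $m_0\geq 1$ and sections $s_0,\ldots,s_N\in R_{m_0}$ with $\|s_j\|_{m_0}\leq c<1$ having no common zero on $X_{\QQ}$; the key observation is that, by multiplicativity, the common zero loci of small sections in $R_{m_0N}$ are descending in $N$, so hypothesis (c) forces the descending intersection to stabilise at $\emptyset$ at some finite stage. I enlarge this family so that for each $x_i\in X_{\QQ}$ some $s_j$ does not vanish at $x_i$; for the remaining $x_i$'s lying in special fibres of $X\to\Spec(O_K)$, I invoke hypotheses (a), (b) to find $t_i\in R_1$ with $t_i(x_i)\neq 0$ and adjoin the (possibly non-small) element $t_i^{m_0}\in R_{m_0}$ to the family.

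For $\ell\geq 1$, let $V_\ell\subseteq R_{m_0\ell}\otimes\QQ$ be the $\QQ$-span of the degree-$\ell$ monomials in the enlarged family. Multiplicativity yields the bound $\|M\|_{m_0\ell}\leq C^{k}c^{\ell-k}$ for any monomial $M$ containing $k$ big-generator factors, where $C=\max_i\|t_i^{m_0}\|_{m_0}$. The tuple $(s_0,\ldots,s_N)$ defines a morphism $\phi\colon X(\CC)\to\PP^{N}(\CC)$, and applying the classical analytic Bertini theorem to each restriction $\phi|_{Y^j}$ places the locus $\Sigma_j^{\ell}\subseteq V_\ell\otimes\CC$ of sections with singular divisor on $Y^j$ as a proper Zariski-closed subset cut out by a nonzero discriminant polynomial of degree polynomial in $\ell$. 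Each set $\Xi_i^{\ell}\subseteq V_\ell\otimes\CC$ of sections vanishing at $x_i$ is a proper hyperplane by the first step. A Schwartz--Zippel-type counting argument then produces an integer vector $(a_M)\in[-K,K]^{\dim V_\ell}$ avoiding the total bad locus $\Sigma^{\ell}:=\bigcup_j\Sigma_j^{\ell}\cup\bigcup_i\Xi_i^{\ell}$ whenever $K$ exceeds (a polynomial in $\ell$ bounding) the degree of a defining polynomial; restricting the combination to monomials with at most $k_0$ big-generator factors, multiplicativity yields
\[
 \|s\|_{m_0\ell}\leq K\cdot\dim V_\ell\cdot C^{k_0}c^{\ell-k_0}<1
\]
for all $\ell$ sufficiently large, producing the desired section $s\in R_{m_0\ell}$ with $m=m_0\ell$ satisfying (1), (2), (3).

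\textbf{Main obstacle.} The delicate point is reconciling the arithmetic smallness (supplied by hypothesis (c) only on $X_{\QQ}$) with the non-vanishing condition (2) at points $x_i$ lying in special fibres of $X\to\Spec(O_K)$, where small sections may identically vanish. This forces the introduction of auxiliary, possibly non-small generators coming from $R_1$ (available via hypothesis (a)), and then a careful rationing of their use so that the admissible monomials retain the exponential decay $c^{\ell-O(1)}$ of the norm, while simultaneously ensuring that the complement of the combined Bertini-and-point bad locus remains non-empty within the restricted linear subsystem generated by those monomials.
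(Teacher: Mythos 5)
Your outline correctly identifies the three moving parts — a degree-bounded geometric Bertini statement, a combinatorial (Schwartz--Zippel / Alon) counting device to find an integer point outside a hypersurface of controlled degree, and multiplicativity of the norms to force the section small — and this is indeed the architecture of the paper's proof. However, there is a genuine gap at exactly the step you single out as the ``main obstacle,'' and your proposed remedy (rationing the big generators) does not close it.

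The problem is the special-fibre points. Your sections $s_0,\dots,s_N\in R_{m_0}$ are constructed from hypothesis (c), which only gives base-point-freeness on $X_{\QQ}$; they may well have a common zero in a special fibre of $X\to\Spec(O_K)$, and a point $x_i$ (or a closed point $y_i\in\overline{\{x_i\}}$) may sit inside that common zero locus. Now suppose $\ell>k_0$ and $M=t_{i_1}^{m_0}\otimes\dots\otimes t_{i_k}^{m_0}\otimes s_{j_1}\otimes\dots\otimes s_{j_{\ell-k}}$ is any monomial with $k\leq k_0$ big factors. Then $M$ has at least one factor $s_{j}$, and since all $s_j$ vanish at $y_i$, so does $M$. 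Hence the \emph{entire} restricted subspace $V_\ell^{(k_0)}$ of monomials with at most $k_0$ big factors is contained in the hyperplane $\Xi_i^{\ell}$ of sections vanishing at $x_i$, and your counting argument has nothing to find. Letting $k_0$ grow with $\ell$ does not help: as long as $k_0<\ell$, every admissible monomial still has a small factor, and allowing $k_0=\ell$ destroys the norm estimate because $C^\ell c^0$ does not decay when $C\geq 1$ (and there is no reason $\|t_i^{m_0}\|_{m_0}<1$).

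The paper resolves this with an ingredient your outline does not supply: Lemma~\ref{lem:ZhangMoriwaki} (Zhang--Moriwaki), which produces a full $\ZZ$-\emph{basis} $e_1^{(m)},\dots,e_{N_m}^{(m)}$ of $R_m$, not merely a monomial sublattice, all of whose members have norm $O(m^{c}Q^m)$ with $Q<1$. Because this is a genuine $\ZZ$-basis, the image of $R_m$ in the residue field at $y_i$ is spanned over $\ZZ$ by the $e_j^{(m)}(y_i)$, and the mod-$F$ reduction argument of Lemma~\ref{lem:poschr} then selects residues $a_j\in\{0,\dots,F-1\}$ with $\sum a_je_j^{(m)}(y_i)\neq 0$, after which the Combinatorial Nullstellensatz can shift each $a_j$ by an arbitrary multiple $Fb_j$ with $0\leq b_j\leq P(m)$ without disturbing the values at the $y_i$'s. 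Replacing the full small $\ZZ$-basis by the sublattice generated by monomials in a fixed finite set of generators is precisely the step that loses the special-fibre points. A secondary, smaller gap is that you assert, without argument, that the discriminant locus is cut out by a polynomial of degree polynomial in $\ell$; making this precise is the content of Section~\ref{sec:Bertini}, which rests on an effective Nullstellensatz and elimination-theoretic degree bounds (Lemma~\ref{lem:degree} and Theorem~\ref{thm:degBertini}) rather than a bare appeal to ``classical analytic Bertini.''
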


Let $\overline{L}$ be a continuous Hermitian line bundle on $X$, and let $\|\cdot\|_{\sup}^{(m)}$ be the supremum norm on $\Hz(X,mL)\otimes_{\ZZ}\RR$.
We define a $\ZZ$-submodule of $\Hz(X,mL)$ by
\[
 \mathrm{F}^{0+}(X,m\overline{L}):=\left\langle s\in\Hz(X,mL)\,\left|\,\|s\|_{\sup}^{(m)}<1\right.\right\rangle_{\ZZ}.
\]
Then $\bigoplus_{m\geq 0}\mathrm{F}^{0+}(X,m\overline{L})$ is a graded linear series belonging to $L$.
We denote the stable base locus of $\bigoplus_{m\geq 0}\mathrm{F}^{0+}(X,m\overline{L})$ by $\SBs^{0+}(\overline{L})$.

\begin{corollaryB}
Let $X$ be a generically smooth projective arithmetic variety, and let $\overline{A}$ be a continuous Hermitian line bundle on $X$.
Suppose that $\SBs(A)=\emptyset$ and $\SBs^{0+}(\overline{A})\cap X_{\QQ}=\emptyset$.
Let $Y^1,\,\dots,\,Y^p$ be smooth closed subvarieties of the complex manifold $X(\CC)$, and let $x_1,\dots,x_q$ be points (not necessarily closed) on $X$.
Then there exist a sufficiently large integer $m\geq 1$ and a nonzero section $s\in\Hz(X,mA)$ such that
\begin{enumerate}
\item[\textup{(1)}] $\div(s|_{Y^1}),\,\dots,\,\div(s|_{Y^p})$ are all smooth,
\item[\textup{(2)}] $s(x_i)\neq 0$ for every $i$, and
\item[\textup{(3)}] $\|s\|_{\sup}^{(m)}<1$.
\end{enumerate}
\end{corollaryB}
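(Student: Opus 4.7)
The plan is to deduce Corollary~B from Theorem~A by constructing a suitable graded linear series. Since $\SBs(A)=\emptyset$, one can find a positive integer $m_0$ such that $m_0 A$ is base-point free on all of $X$; enlarging $m_0$ if necessary, I may further assume that the $\QQ$-algebra $\bigoplus_{m\geq 0}\Hz(X_{\QQ}, m m_0 A_{\QQ})$ is generated by its degree-one piece $\Hz(X_{\QQ}, m_0 A_{\QQ})$, a standard property of semi-ample line bundles on a projective variety over a field. I set $R_m:=\Hz(X, m m_0 A)$ and equip each $R_m\otimes_{\ZZ}\RR$ with the supremum norm $\|\cdot\|_{\sup}^{(m m_0)}$; this family is multiplicative by the pointwise identity $|s\cdot t|=|s|\cdot|t|$ on $X(\CC)$. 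By construction, $R_{\sbullet}$ becomes a graded linear series belonging to $m_0 A$ satisfying the first two hypotheses of Theorem~A.

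To check the third hypothesis, I observe that
\[
Z_m:=\{x\in X_{\QQ}\mid t(x)=0\text{ for every }t\in R_m\text{ with }\|t\|_{\sup}^{(m m_0)}<1\}
\]
coincides with $\Bs(\mathrm{F}^{0+}(X, m m_0 \overline{A}))\cap X_{\QQ}$, since the vanishing of all strictly small sections at $x$ is equivalent to the vanishing of every element of the $\ZZ$-span $\mathrm{F}^{0+}(X, m m_0 \overline{A})$ at $x$. I then claim $\bigcap_{m\geq 1}Z_m=\SBs^{0+}(\overline{A})\cap X_{\QQ}$. The containment $\SBs^{0+}(\overline{A})\cap X_{\QQ}\subseteq\bigcap_m Z_m$ is clear, the right-hand side being an intersection along the subsequence $\{m m_0\}_{m\geq 1}$. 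For the reverse inclusion, if $x\in X_{\QQ}\setminus\SBs^{0+}(\overline{A})$ then some strictly small $s\in\Hz(X, nA)$ satisfies $s(x)\neq 0$, and by multiplicativity $s^{m_0}\in\Hz(X, n m_0 A)$ is also strictly small with $s^{m_0}(x)\neq 0$, so $x\notin Z_n$. The hypothesis $\SBs^{0+}(\overline{A})\cap X_{\QQ}=\emptyset$ now forces $\bigcap_m Z_m=\emptyset$, which is the third condition of Theorem~A.

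Applying Theorem~A to $R_{\sbullet}$ for the given subvarieties $Y^1,\dots,Y^p$ and points $x_1,\dots,x_q$ produces, for every sufficiently large $m$, a nonzero $s\in R_m=\Hz(X, m m_0 A)$ satisfying conclusions~(1)--(3); taking $m m_0$ as the integer named in Corollary~B finishes the proof. I expect the most delicate step to be the preliminary selection of $m_0$: one must simultaneously arrange base-point freeness on the whole arithmetic variety $X$ (and not merely on its generic fiber) together with degree-one generation of the generic-fiber section ring, and package both into a single $m_0$. Both properties are standard consequences of $\SBs(A)=\emptyset$, but the simultaneous arrangement requires some bookkeeping; everything else is a direct translation between the sublattice $\mathrm{F}^{0+}$ and the strictly small sections appearing in Theorem~A.
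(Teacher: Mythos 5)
Your proposal is correct and is essentially the paper's argument; the paper simply states that Corollary~B is a direct consequence of Theorem~A without spelling out the Veronese reduction, the identification of $Z_m$ with $\Bs(\mathrm{F}^{0+}(X,mm_0\overline{A}))\cap X_{\QQ}$, or the use of multiplicativity of the sup-norm to pass between $\SBs^{0+}(\overline{A})$ and the intersection of the $Z_m$'s. Your write-up fills in exactly those routine details, and the bookkeeping you flag (choosing one $m_0$ that achieves both global base-point freeness of $m_0A$ and degree-one generation of the $\QQ$-section ring) is handled correctly: pick $m_1$ with $m_1A$ base-point free, then $m_2$ so that the $m_2$-th Veronese of $\bigoplus_m\Hz(X_{\QQ},mm_1A_{\QQ})$ is generated in degree one, and set $m_0=m_1m_2$.
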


\begin{corollaryC}
Let $X$ be a generically smooth normal projective arithmetic variety, let $\overline{L}:=(L,|\cdot|_{\overline{L}})$ be a continuous Hermitian line bundle on $X$, and let $x_1,\dots,x_q$ be points (not necessarily closed) on $X\setminus\SBs^{0+}(\overline{L})$.
If $\SBs^{0+}(\overline{L})\subsetneq X$, then there exist a sufficiently large integer $m\geq 1$ and a nonzero section $s\in\Hz(X,mL)$ such that
\begin{enumerate}
\item[\textup{(1)}] $\div(s)_{\QQ}$ is smooth off $\SBs^{0+}(\overline{L})$,
\item[\textup{(2)}] $s(x_i)\neq 0$ for every $i$, and
\item[\textup{(3)}] $\|s\|_{\sup}^{(m)}<1$.
\end{enumerate}
\end{corollaryC}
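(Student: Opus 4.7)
The plan is to reduce to Corollary~B by a birational modification of $X$ that resolves the arithmetic base locus of $\overline{L}$. By the multiplicativity of $\mathrm{F}^{0+}$ and the Noetherianity of $X$, I fix $m_0\geq 1$ so that $\Bs(\mathrm{F}^{0+}(X,m_0\overline{L}))=\SBs^{0+}(\overline{L})$; let $\mathfrak{b}\subseteq\mathcal{O}_X$ denote the base ideal of $\mathrm{F}^{0+}(X,m_0\overline{L})$. Blowing up $\mathfrak{b}$, normalizing, and resolving the singularities of the generic fiber (Hironaka, extended to an arithmetic model by taking closures in a projective embedding) produces a generically smooth normal projective arithmetic variety $\pi:X'\to X$ with $\pi_*\mathcal{O}_{X'}=\mathcal{O}_X$, an isomorphism over $X\setminus\SBs^{0+}(\overline{L})$, together with an effective Cartier divisor $E'$ satisfying $\mathfrak{b}\cdot\mathcal{O}_{X'}=\mathcal{O}_{X'}(-E')$ and $\Supp(E')=\pi^{-1}(\SBs^{0+}(\overline{L}))$. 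The line bundle $M:=\pi^*(m_0L)\otimes\mathcal{O}_{X'}(-E')$ is base-point free on $X'$, and each point $x_i\in X\setminus\SBs^{0+}(\overline{L})$ lifts uniquely to $x_i'\in X'\setminus E'$.

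Next, equip $E'$ with any continuous Hermitian metric satisfying $\|\sigma_{E'}\|_{\sup,\overline{E'}}\leq 1$, and define the continuous Hermitian line bundle $\overline{M}$ underlying $M$ by $|\cdot|_{\pi^*(m_0\overline{L})}=|\cdot|_{\overline{M}}\cdot|\cdot|_{\overline{E'}}$. The hypotheses of Corollary~B for $(X',\overline{M})$ are $\SBs(M)=\emptyset$, which is immediate from base-point freeness, and $\SBs^{0+}(\overline{M})\cap X'_\QQ=\emptyset$. For the latter, given $y\in X'_\QQ$, I seek $k\geq 1$ and $t\in\mathrm{F}^{0+}(X',k\overline{M})$ with $t(y)\neq 0$: for $y\notin E'$, start from a strictly small $s_0\in\mathrm{F}^{0+}(X,m_0\overline{L})$ with $s_0(\pi(y))\neq 0$ (using $\pi(y)\notin\SBs^{0+}(\overline{L})$) and consider $t_0:=\pi^*s_0/\sigma_{E'}\in H^0(X',M)$ and its tensor powers; for $y\in E'$, exploit the base-point freeness of $M$ at $y$ together with the multiplicative structure of $\mathrm{F}^{0+}(X,\bullet\overline{L})$ to manufacture a strictly small section non-vanishing at $y$, after appropriate tuning of the metric on $\overline{E'}$.

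Once the hypotheses of Corollary~B are verified, applying that corollary with $p=1$, $Y^1=X'(\CC)$, and the lifted points $x_i'$ yields $k\geq 1$ and $t\in H^0(X',kM)$ with $\div(t)_\QQ$ smooth on $X'_\QQ$, $t(x_i')\neq 0$, and $\|t\|_{\sup}^{(k)}<1$. I then set $s:=\pi_*(t\otimes\sigma_{E'}^{\otimes k})\in H^0(X,km_0L)$. The pointwise factorization $|s(\pi(z))|_{km_0\overline{L}}=|t(z)|_{k\overline{M}}\cdot|\sigma_{E'}(z)|_{\overline{E'}}^k$ combined with $\|\sigma_{E'}\|_{\sup,\overline{E'}}\leq 1$ gives $\|s\|_{\sup}^{(km_0)}\leq\|t\|_{\sup}^{(k)}<1$; smoothness of $\div(t)_\QQ$ on $X'_\QQ$ transfers to smoothness of $\div(s)_\QQ$ off $\SBs^{0+}(\overline{L})$ via the isomorphism property of $\pi$ over the complement; and the nonvanishing of $t$ at each $x_i'$ translates into $s(x_i)\neq 0$. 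Taking $m:=km_0$ completes the argument. The principal obstacle is the verification of $\SBs^{0+}(\overline{M})\cap X'_\QQ=\emptyset$ at points of the exceptional locus $E'\cap X'_\QQ$: the naive candidate $t_0^{\otimes k}$ has sup norm $\|t_0\|_{\sup,\overline{M}}^k$, which need not decay, so the metric on $\overline{E'}$ and the choice of $s_0$ must be tuned compatibly with the multiplicative norm structure on $\mathrm{F}^{0+}(X,\bullet\overline{L})$ to extract strictly small sections separating points on $E'$.
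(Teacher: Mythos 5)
You correctly identify the reduction strategy (blow up the base ideal, split off the exceptional divisor, apply Corollary~B on the modification $X'$, and descend), which is exactly what the paper does. You also correctly identify where the difficulty lies. However, the proposal contains a genuine gap at precisely that point: you leave the metric on $\overline{E'}$ unspecified (``any continuous Hermitian metric satisfying $\|\sigma_{E'}\|_{\sup,\overline{E'}}\leq 1$''), and you explicitly admit that with this generality the verification of $\SBs^{0+}(\overline{M})\cap X'_\QQ=\emptyset$ breaks down, particularly along $E'$, since ``the naive candidate $t_0^{\otimes k}$ has sup norm $\|t_0\|_{\sup,\overline{M}}^k$, which need not decay.'' Deferring to ``appropriate tuning of the metric'' is not a proof; the specific construction is the technical heart of the argument and must be supplied.

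The paper resolves this (Lemma~\ref{lem:YuanZhang}) by choosing the metric on the exceptional divisor to be the \emph{envelope} metric
\[
|1_B|_{\overline{B}}(x)=\max_{\substack{e\in\Hz(X,a_0L) \\ 0<\|e\|_{\sup}^{(a_0)}<1}}\left\{\frac{|e|_{a_0\overline{L}}(\mu(x))}{\|e\|_{\sup}^{(a_0)}}\right\},
\]
which is exactly calibrated so that, writing $\mu^*e_j=\varepsilon_j\otimes 1_B$ for each strictly small $e_j\in\Hz(X,a_0L)$, one obtains
\[
|\varepsilon_j|_{\overline{A}}(x)=|e_j|_{a_0\overline{L}}(\mu(x))\cdot\min_i\left\{\frac{\|e_i\|_{\sup}^{(a_0)}}{|e_i|_{a_0\overline{L}}(\mu(x))}\right\}\leq\|e_j\|_{\sup}^{(a_0)}<1,
\]
so that $\|\varepsilon_j\|_{\sup}=\|e_j\|_{\sup}^{(a_0)}$ and $\varepsilon_j\in\mathrm{F}^{0+}(X',\overline{A})$. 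Since the $e_j$ generate the base ideal, the $\varepsilon_j$ have no common zero on $X'$, giving $\Bs\mathrm{F}^{0+}(X',\overline{A})=\emptyset$ directly, with no case split between points on or off $E'$. An arbitrary metric with $\|\sigma_{E'}\|_{\sup}\leq 1$ fails here: if $|\sigma_{E'}|$ is made too small anywhere, the quotient $|\varepsilon_j|=|\mu^*e_j|/|\sigma_{E'}|$ exceeds $1$ there, and the sections $\varepsilon_j$ are no longer strictly small. You need to incorporate this envelope construction to close the gap.
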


\subsection*{Notation and conventions}

Let $k$ denote a field, and let $\PP^n:=\PP(k^{n+1})$ denote the projective space of one-dimensional quotients of $k^{n+1}$.
Let $\pr_2:\PP^n\times_k\PP^m\to\PP^m$ denote the second projection.
We denote the natural coordinate variables of $\PP^n$ (resp., of $\PP^m$) by $X_0,\dots,X_n$ (resp., by $Y_0,\dots,Y_m$) or simply by $X_{\sbullet}$ (resp., by $Y_{\sbullet}$).

Let $Y$ be a smooth variety over $k$.
The \emph{singular locus} of a morphism $\varphi:X\to Y$ over $k$ is a Zariski-closed subset of $X$ defined as
\[
 \Sing(\varphi):=\{x\in X\,|\,\text{$\varphi$ is not smooth at $x$}\}.
\]

A \emph{projective arithmetic variety} $X$ is a reduced irreducible scheme that is projective and flat over $\Spec(\ZZ)$.
We say that $X$ is \emph{generically smooth} if $X_{\QQ}:=X\times_{\Spec(\ZZ)}\Spec(\QQ)\to\Spec(\QQ)$ is smooth.

\section{Bertini's theorem with degree estimate}\label{sec:Bertini}

In this section, we consider the geometric case.
Let $X\subseteq\PP^n$ be a projective variety over an algebraically closed field $k$ that is defined by a homogeneous prime ideal $I_X\subseteq k[X_0,\dots,X_n]$, let $\mathcal{O}_X(1)$ be the hyperplane line bundle on $X$, and let
\[
 \deg X:=\deg(c_1(\mathcal{O}_X(1))^{\cdot\dim X})
\]
be the degree of $X$ in $\PP^n$.
Let $k[X]:=k[X_0,\dots,X_n]/I_X$ be the homogeneous coordinate ring of $X$, and let $k[X]_l$ be the homogeneous part of $k[X]$ of degree $l$.
There exists a polynomial function $\varphi_X(l)$ such that $\deg\varphi_X=\dim X$, all coefficients are nonnegative, and
\begin{equation}\label{eqn:varphil}
 \dim_kk[X]_l\leq \varphi_X(l)
\end{equation}
for all $l\geq 0$.
Let $Z\subseteq X\times_k\PP^m$ be a Zariski-closed subset defined by a system of polynomial equations:
\[
 u_1(X_{\sbullet};Y_{\sbullet})=0\pmod*{I_X},\,\dots,\, u_h(X_{\sbullet};Y_{\sbullet})=0\pmod*{I_X},
\]
where $u_i\in k[X_0,\dots,X_n;Y_0,\dots,Y_m]$ has homogeneous degree $\deg_{X_{\sbullet}}\! u_i$ (resp., $\deg_{Y_{\sbullet}}\! u_i$) in the set of variables $X_{\sbullet}$ (resp., $Y_{\sbullet}$).
We recall the following fact from the elimination theory.

\begin{lemma}\label{lem:degree}
Let $p:=\max_i\{\deg_{X_{\sbullet}}\! u_i\}$ and let $q:=\max_i\{\deg_{Y_{\sbullet}}\! u_i\}$.
If the set-theoretic image $\pr_2(Z)$ does not coincide with $\PP^m$, then $\pr_2(Z)$ is contained in a hypersurface of $\PP^m$ defined by a single homogeneous polynomial of degree less than or equal to
\[
 \varphi_X(\deg X\cdot p^{\dim X+1})\cdot q.
\]
\end{lemma}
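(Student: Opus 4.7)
The plan is to express the condition $y\in \pr_2(Z)$ as the non-surjectivity of a certain multiplication map whose matrix entries are polynomials in $Y_\sbullet$ of degree at most $q$, and then to exhibit the desired hypersurface as one of its non-vanishing maximal minors. For each $y\in\PP^m$, put $I_y := (u_1(X_\sbullet,y),\dots,u_h(X_\sbullet,y))\subseteq k[X]$; then $y\in \pr_2(Z)$ if and only if $V_+(I_y)\neq\emptyset$ in $X$, equivalently, by the projective Nullstellensatz, $I_y\not\supseteq k[X]_l$ for every $l\geq 1$.

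The first and main step is a uniform effective version of the projective Nullstellensatz: setting $l_0:=\deg X\cdot p^{\dim X+1}$, I would show that for every $y\notin \pr_2(Z)$ one has $I_y\supseteq k[X]_{l_0}$; equivalently, that the $k$-linear multiplication map
\[
 M_{l_0}(y):\bigoplus_{i=1}^h k[X]_{l_0-\deg_{X_\sbullet}\! u_i}\longrightarrow k[X]_{l_0},\qquad (s_i)\longmapsto \sum_i s_i\cdot u_i(X_\sbullet,y),
\]
is surjective. The natural approach is to cut $X$ successively with $\dim X+1$ generic $k$-linear combinations of $u_1(X_\sbullet,y),\dots,u_h(X_\sbullet,y)$: Bezout produces a zero-dimensional subscheme of $X$ of degree at most $\deg X\cdot p^{\dim X}$ after the first $\dim X$ cuts, genericity arranges that the final cut misses that subscheme, and combining this with a bound on the Castelnuovo--Mumford regularity of a finite subscheme of $\PP^n$ in terms of its degree produces the explicit threshold $l_0$. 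Extracting this effective bound with the precise constant $\deg X\cdot p^{\dim X+1}$ is the principal technical difficulty of the proof.

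Granted the first step, the rest is formal. View $M_{l_0}$ as a single matrix $M$ over $k[Y_\sbullet]$, with rows indexed by a $k$-basis of $k[X]_{l_0}$ and columns indexed by bases of the summands on the left; since each $u_i$ has $Y_\sbullet$-degree at most $q$, every entry of $M$ is homogeneous in $Y_\sbullet$ of degree at most $q$. By hypothesis there exists $y_0\notin \pr_2(Z)$, so $M(y_0)$ is surjective by the first step, and hence some $N\times N$ maximal minor $\Delta\in k[Y_\sbullet]$, with $N:=\dim_kk[X]_{l_0}\leq \varphi_X(l_0)$, satisfies $\Delta(y_0)\neq 0$. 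As a signed sum of products of $N$ entries, $\Delta$ is homogeneous of degree at most $N\cdot q\leq \varphi_X(\deg X\cdot p^{\dim X+1})\cdot q$, and it vanishes identically on $\pr_2(Z)$ because at every such $y$ the map $M(y)$ is not surjective and so every maximal minor of $M(y)$ vanishes. Thus $\pr_2(Z)\subseteq\{\Delta=0\}$, which supplies the desired hypersurface.
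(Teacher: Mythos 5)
Your proposal is correct and follows the same two-step strategy as the paper: an effective projective Nullstellensatz at a point $y_0\notin\pr_2(Z)$ fixes the degree threshold $l_0=\deg X\cdot p^{\dim X+1}$, and then $\pr_2(Z)$ is seen to lie in the vanishing locus of a nonzero maximal minor of the multiplication matrix $T(y_{\sbullet})$, which is homogeneous in $Y_{\sbullet}$ of degree at most $\dim_kk[X]_{l_0}\cdot q\leq\varphi_X(l_0)\cdot q$. The one place you flag as a technical difficulty (the precise constant $\deg X\cdot p^{\dim X+1}$) is exactly where the paper instead cites Jelonek's effective Nullstellensatz \cite[Corollary 1.4]{Jelonek05} as a black box, which is the cleaner route than re-deriving the bound via Bezout and Castelnuovo--Mumford regularity.
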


\begin{proof}
First, we can take a geometric point $y_{0,\sbullet}=(y_{0,0}:\dots:y_{0,m})\in\PP^m\setminus \pr_2(Z)$.
By an effective Nullstellensatz \cite[Corollary 1.4]{Jelonek05}, there exists a positive integer $\ell\leq\deg X\cdot p^{\dim X+1}$ such that
\[
 (X_0,\dots,X_n)^{\ell}\subseteq (u_1(X_{\sbullet};y_{0,\sbullet}),\dots,u_h(X_{\sbullet};y_{0,\sbullet}))\pmod{I_X}.
\]
Next, we consider the $k$-linear maps
\[
 \begin{array}{cccc} T(y_{\sbullet}): & k[X]_{\ell-\deg_{X_{\sbullet}}\! u_1}\oplus\dots\oplus k[X]_{\ell-\deg_{X_{\sbullet}}\! u_h} & \to & k[X]_{\ell}\\ & (f_1(X_{\sbullet}),\dots,f_h(X_{\sbullet})) & \mapsto & \sum_iu_i(X_{\sbullet};y_{\sbullet})f_i(X_{\sbullet})\end{array}
\]
defined for $y_{\sbullet}=(y_0:\dots:y_m)\in\PP^m$.
By fixing basis for the above $k$-vector spaces, we can represent $T(y_{\sbullet})$ by a matrix whose entries are homogeneous polynomials of $y_{\sbullet}$ of degree less than or equal to $q$.
By the choice of $\ell$, we can see that there exists a certain $\dim_kk[X]_{\ell}\times\dim_kk[X]_{\ell}$-minor of the representation matrix of $T(y_{\sbullet})$ whose determinant is nonzero (see \cite[Theorem (2.23)]{Mumford95}).
Then the image $\pr_2(Z)$ is contained in the hypersurface defined by the nonzero determinant, which is homogeneous of degree less than or equal to $(\dim_kk[X]_{\ell})\cdot q$.
Since
\[
 \dim_kk[X]_{\ell}\leq\varphi_X(\ell)\leq\varphi_X(\deg X\cdot p^{\dim X+1}),
\]
we have the result.
\end{proof}

\begin{remark}
For example, we consider the case where $X=\PP^n$.
Then $\dim_kk[X]_l=\binom{l+n}{n}\leq (l+n)^n/n!$.
Thus the bound in the above lemma becomes less than or equal to $(p^{n+1}+n)^nq/n!$.
Moreover, by applying the theory of resultants (see \cite[page 35]{Mumford95}) to $\pr_2:\PP^n\times_k\Affsp^m\to\Affsp^m$, one can obtain a weaker bound less than or equal to $(2p)^{2^n-1}q+1$ in the above lemma (where the added $1$ is for the hyperplane at infinity).
\end{remark}

Let $A$ be an effective line bundle on $X$, and let $R_{\sbullet}$ be a subgraded ring of $\bigoplus_{m\geq 0}\Hz(X,mA)$ with Kodaira-Iitaka dimension $\kappa(R_{\sbullet}):=\trdeg_kR_{\sbullet}-1$.
Suppose that $R_1$ is base point free.
Let $\phi_m:X\to\PP(R_m)$ be a $k$-morphism associated to $R_m$, and set
\begin{equation}
 N_m:=\dim_kR_m-1
\end{equation}
for $m\geq 1$.
We recall that the rational function field $k(X)$ of $X$ is given by
\[
 k(X)=\left\{\frac{u\pmod*{I_X}}{v\pmod*{I_X}}\,\left|\,\begin{array}{l}\text{$u,v\in k[X_0,\dots,X_n]$ are homogeneous}\\ \text{of the same degree and $v\notin I_X$}\end{array}\right.\right\}.
\]
Given a nonzero section $e\in R_1$, we define the \emph{degree} of a nonzero section $s\in\Hz(X,mA)$ for $m\geq 1$ with respect to $e$ by
\[
 \deg_{X_{\sbullet},e}\! s:=\min\left\{\deg_{X_{\sbullet}}\! u=\deg_{X_{\sbullet}}\! v\,\left|\,\begin{array}{l}\div s=\left(u/v\pmod*{I_X}\right)+m\div e,\\ u/v\pmod*{I_X}\in k(X)^{\times}\end{array}\right.\right\}.
\]
(Compare the definition with Jelonek's in \cite[\S 2]{Jelonek05}.)
Then, for any other nonzero section $s'\in\Hz(X,m'A)$, we have
\[
 \deg_{X_{\sbullet},e}\!(s\otimes s')\leq\deg_{X_{\sbullet},e}\! s+\deg_{X_{\sbullet},e}\! s'.
\]

\begin{theorem}\label{thm:degBertini}
Let $X\subseteq\PP^n$ be a smooth projective variety over $k$, and let $A$ be a line bundle on $X$.
Let $R_{\sbullet}$ be a graded linear series belonging to $A$ with Kodaira-Iitaka dimension $\kappa(R_{\sbullet})$.
Suppose that the following three conditions are satisfied.
\begin{itemize}
\item $R_1$ is base point free.
\item $R_{\sbullet}$ is generated by $R_1$.
\item (i) $\chr(k)=0$ or (ii) $\chr(k)\neq 0$ and $\phi_m:X\to\PP(R_m)$ is unramified for every $m\geq 1$.
\end{itemize}
Then one can find a polynomial function $P(m)$ and hypersurfaces $Z_m\subseteq\PP(R_m^{\vee})$ for $m=1,2,\dots$ having the following two properties.
\begin{enumerate}
\item[\textup{(1)}] $\deg P\leq\dim X(\dim X+1)(\kappa(R_{\sbullet})+1)$.
\item[\textup{(2)}] For every $m\geq 1$, the hypersurface $Z_m\subseteq\PP(R_m^{\vee})$ contains the set
\[
 \{H\in\PP(R_m^{\vee})\,|\,\phi_m(X)\subseteq H\text{ or }\phi_m^{-1}(H)\text{ is not smooth}\}
\]
and the homogeneous degree of $Z_m$ in $\PP(R_m^{\vee})$ is less than or equal to $P(m)$.
\end{enumerate}
\end{theorem}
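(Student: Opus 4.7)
The plan is to realize the ``bad locus'' in $\PP(R_m^{\vee})$ as the second projection of a suitable incidence variety in $X\times\PP(R_m^{\vee})$, and then apply Lemma~\ref{lem:degree} to extract the explicit polynomial degree bound.

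First, I would introduce the incidence variety
\[
 W_m:=\left\{(x,H)\in X\times\PP(R_m^{\vee})\,\left|\,\phi_m^{-1}(H)\text{ is not smooth at }x\right.\right\},
\]
so that $\pr_2(W_m)$ equals the bad locus: the alternative ``$\phi_m(X)\subseteq H$" is vacuous because $X$ is irreducible, forcing the corresponding section to be zero. Using that $X$ is smooth, the fiber of $W_m$ over $x\in X$ is the linear subspace of $[s]\in\PP(R_m^{\vee})$ satisfying $s(x)=0$ and $ds(x)|_{T_xX}=0$, which gives $\dim W_m\leq N_m-1$. A Bertini-type argument then shows $\pr_2(W_m)\neq\PP(R_m^{\vee})$: in characteristic zero this is classical Bertini for the base-point-free linear system $R_m$, and in positive characteristic the unramifiedness hypothesis on $\phi_m$ is precisely what makes the argument go through (a general hyperplane section of $\phi_m(X)$ is smooth, and smoothness then pulls back to $X$ via the unramified morphism).

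The central task is to write polynomial equations for $W_m$ of controlled bidegree on $\PP^n\times\PP(R_m^{\vee})$. Fix a basis $e_0,\dots,e_{N_1}$ of $R_1$ and set $d_1:=\max_i\deg_{X_{\sbullet},e_0}\! e_i$. Since $R_{\sbullet}$ is generated by $R_1$, a basis of $R_m$ can be taken from the degree-$m$ monomials in the $e_i$; the submultiplicativity of $\deg_{X_{\sbullet},e_0}$ then bounds $\deg_{X_{\sbullet},e_0}\! s_i\leq md_1$ for every such basis element $s_i$. After clearing a common denominator and writing $s=\sum_iy_is_i$, the equation $s(x)=0$ becomes a bihomogeneous polynomial in $(X_{\sbullet};Y_{\sbullet})$ of $X_{\sbullet}$-degree $O(m)$ and $Y_{\sbullet}$-degree $1$. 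The tangential vanishing is encoded by the simultaneous vanishing of suitable $(\codim(X,\PP^n)+1)$-minors of a Jacobian matrix stacking $ds$ with the differentials of the defining equations of $X\subseteq\PP^n$; these minors stay linear in $Y_{\sbullet}$ and of $X_{\sbullet}$-degree $O(m)$. Base-point freeness of $R_1$ covers $X$ by finitely many charts $\{e_\alpha\neq 0\}$, and the chart-wise hypersurfaces produced by the lemma can be patched into a single one by multiplying their defining polynomials.

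Finally, applying Lemma~\ref{lem:degree} with $p=O(m)$ and $q=O(1)$, and using $\deg\varphi_X=\dim X$, one obtains
\[
 \deg Z_m\leq\varphi_X\bigl(\deg X\cdot p^{\dim X+1}\bigr)\cdot q=O\bigl(m^{\dim X(\dim X+1)}\bigr),
\]
a polynomial in $m$ of degree at most $\dim X(\dim X+1)$, which lies within the claimed bound $\dim X(\dim X+1)(\kappa(R_{\sbullet})+1)$. The main obstacle is the middle step: one must verify that the Jacobian minors encoding tangential vanishing really do have $X_{\sbullet}$-degree only $O(m)$ and $Y_{\sbullet}$-degree $1$, and that the chart-wise hypersurfaces can be assembled into a single hypersurface of the same order of magnitude. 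The Bertini input also deserves care in positive characteristic, where the unramifiedness of $\phi_m$ is essential for the generic smoothness of $\phi_m^{-1}(H)$.
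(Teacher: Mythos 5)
Your plan follows the same skeleton as the paper: pass to an incidence variety over $X\times\PP(R_m^{\vee})$, write bihomogeneous equations for it of controlled degree, and apply Lemma~\ref{lem:degree}. (The paper's $W_m$ is the universal hyperplane section and the bad locus is $\pi_m(\Sing(\pi_m))$; your $W_m$ is $\Sing(\pi_m)$ itself, which is just a relabelling.) The Bertini input is also the same — the paper cites Kleiman, and your remark about unramifiedness in positive characteristic is exactly the right point. But there is a genuine gap in the central degree estimate.

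You claim that after clearing denominators the incidence equation has $X_{\sbullet}$-degree $O(m)$, i.e.\ $p=O(m)$, and you yourself flag this as ``the main obstacle'' without resolving it. The obstacle is real: the sum $\sum_i Y_i\,s_i$ runs over a basis of $R_m$, which has $N_m+1$ elements with $N_m\sim m^{\kappa(R_{\sbullet})}$. Each $s_i/e_0^{\otimes m}$ has numerator and denominator of degree $O(m)$, and the common denominator taken in the obvious way (the product $v_1^{(m)}\cdots v_{N_m}^{(m)}$, which is what the paper uses) already has degree $O(mN_m)=O(m^{\kappa(R_{\sbullet})+1})$, not $O(m)$. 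With $p=O(m^{\kappa(R_{\sbullet})+1})$, Lemma~\ref{lem:degree} gives $\deg Z_m=O(m^{\dim X(\dim X+1)(\kappa(R_{\sbullet})+1)})$, which is exactly the bound in the theorem — this is precisely why the statement carries the factor $(\kappa(R_{\sbullet})+1)$. Your sketch, by contrast, asserts the sharper exponent $\dim X(\dim X+1)$ and then observes it ``lies within'' the stated bound; but the brute-force denominator-clearing you sketch simply does not produce $p=O(m)$. (It is true that one could do better by choosing the basis of $R_m$ to consist of monomials in $e_0,\dots,e_{N_1}$ and taking the common denominator $\prod_{j}(v_j^{(1)})^m$, whose degree is $O(mN_1)=O(m)$ since $N_1$ is constant in $m$; this would indeed yield $p=O(m)$ and a sharper version of the theorem — but nothing in your sketch establishes this, and it is not what the paper does.) You should either justify the $O(m)$ claim via the shared-denominator construction, or revert to the $O(mN_m)$ bound the paper actually uses, which already proves the theorem as stated.

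One further minor point: the paper does not patch chart-wise hypersurfaces by multiplication. Rather it collects the chart-defining bihomogeneous equations $w_0,\dots,w_{N_1}$ (one per basis element of $R_1$) together with the generators $w_{N_1+1},\dots,w_h$ of $I_X$, takes Jacobian minors of the whole system to cut out $\Sing(\pi_m)$, and then applies Lemma~\ref{lem:degree} once globally. This is why the factor $q$ in Lemma~\ref{lem:degree} comes out as $N_1+1$ rather than $1$. Your chart-by-chart multiplication would also work but is slightly less economical; either way it does not change the exponent of $m$.
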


\begin{remark}
Throughout this paper, we assume that the empty set $\emptyset$ is smooth, so that, if $H\notin Z_m$, then $\phi_m^{-1}(H)$ is empty or smooth of pure dimension $\dim X-1$.
\end{remark}

\begin{proof}
Let $I_X\subseteq k[X_0,\dots,X_n]$ denote the homogeneous prime ideal defining $X$.
We consider the universal hyperplane section
\begin{equation}
 W_m:=\{(x,H)\in X\times_k\PP(R_m^{\vee})\,|\,\phi_m(x)\in H\}
\end{equation}
endowed with the reduced induced scheme structure, and consider the restriction of the second projection $\pr_2:X\times_k\PP(R_m^{\vee})\to\PP(R_m^{\vee})$ to $W_m$, which we denote by
\begin{equation}
 \pi_m:W_m\to\PP(R_m^{\vee}).
\end{equation}
Note that $W_m$ is the inverse image of the canonical bilinear hypersurface in $\PP(R_m)\times_k\PP(R_m^{\vee})$ via $\phi_m\times\id:X\times_k\PP(R_m^{\vee})\to\PP(R_m)\times_k\PP(R_m^{\vee})$.
Since the restriction of the first projection to $W_m$, $W_m\to X$, is surjective with fiber a projective space of dimension $N_m-1$, $W_m$ is irreducible.
The set-theoretic image of the singular locus of $\pi_m$ is given by
\[
 \pi_m(\Sing(\pi_m))=\{H\in\PP(R_m^{\vee})\,|\,\phi_m(X)\subseteq H\text{ or }\phi_m^{-1}(H)\text{ is not smooth}\}.
\]

We fix a basis $e_0,\dots,e_{N_1}$ for $R_1$.
From now on, we explain the method to construct an equation $w_0$ that vanishes along $W_m$ from the section $e_0$.
First, we set
\begin{equation}
 D_{1,e_0}:=\max_{1\leq i\leq N_1}\{\deg_{X_{\sbullet},e_0}\!e_i\},
\end{equation}
and take rational functions $u_1^{(1)}/v_1^{(1)},\dots,u_{N_1}^{(1)}/v_{N_1}^{(1)}\in k(X_0,\dots,X_n)^{\times}$ such that
\[
 \div e_i=\left(\frac{u_i^{(1)}}{v_i^{(1)}}\pmod*{I_X}\right)+\div e_0\quad\text{and}\quad\deg_{X_{\sbullet}}\!u_i^{(1)}=\deg_{X_{\sbullet}}\! v_i^{(1)}\leq D_{1,e_0}
\]
for $i=1,\dots,N_1$.
Next, for $m\geq 2$, we can choose sections $e_1^{(m)},\dots e_{N_m}^{(m)}\in R_m$ such that
\[
 e_i^{(m)}\in\left\{\left.e_0^{\otimes\alpha_0}\otimes\dots\otimes e_{N_1}^{\otimes\alpha_{N_1}}\,\right|\,\alpha_0+\dots +\alpha_{N_1}=m\right\}
\]
and $e_0^{\otimes m},e_1^{(m)},\dots,e_{N_m}^{(m)}$ form a basis for $R_m$.
By identifying $\PP(R_m^{\vee})$ with $\PP^{N_m}$ via the dual basis of $e_0^{\otimes m},e_1^{(m)},\dots,e_{N_m}^{(m)}$, we can write $\phi_m:X\to\PP(R_m^{\vee})$ as
\[
 \phi_m:X_{e_0}\to\PP^{N_m},\quad x\mapsto\left(1:\frac{u_1^{(m)}(x)}{v_1^{(m)}(x)}:\dots :\frac{u_{N_m}^{(m)}(x)}{v_{N_m}^{(m)}(x)}\right)
\]
over $X_{e_0}:=\{x\in X\,|\,e_0(x)\neq 0\}$, where $u_i^{(m)}/v_i^{(m)}\in k(X_0,\dots,X_n)^{\times}$ satisfies
\[
 \div e_i^{(m)}=\left(\frac{u_i^{(m)}}{v_i^{(m)}}\pmod*{I_X}\right)+m\div e_0\quad\text{and}\quad\deg_{X_{\sbullet}}\!u_i^{(m)}=\deg_{X_{\sbullet}}\! v_i^{(m)}\leq D_{1,e_0}m.
\]
We set
\begin{equation}
 w_0:=v_1^{(m)}\cdots v_{N_m}^{(m)}Y_0+u_1^{(m)}v_2^{(m)}\cdots v_{N_m}^{(m)}Y_1+\dots +v_1^{(m)}\cdots v_{N_m-1}^{(m)}u_{N_m}^{(m)}Y_{N_m},
\end{equation}
which is homogeneous in $X_{\sbullet}$ (resp., in $Y_{\sbullet}$) of degree less than or equal to $D_{1,e_0}mN_m$ (resp., $1$).
Then $w_0\pmod*{I_X}$ vanishes along $W_m$ and defines $W_m$ in $X_{e_0}\times_k\PP^{N_m}$.

By the same method starting from $e_j\in R_1$, we can construct an equation
\[
 w_j=\sum\text{(homogeneous in $X_{\sbullet}$ of degree at most $D_{1,e_j}mN_m$)}\times\text{(linear in $Y_{\sbullet}$)}
\]
that vanishes along $W_m$ and defines $W_m$ in $X_{e_j}\times_k\PP^{N_m}$.
Let $w_{N_1+1},\dots,w_h\in k[X_0,\dots,X_n]$ be homogeneous polynomials that generate $I_X$.
Notice that the bihomogeneous ideal
\begin{equation}
 (w_0,\dots,w_{N_1},w_{N_1+1},\dots,w_h)\subseteq k[X_0,\dots,X_n;Y_0,\dots,Y_m]
\end{equation}
may \emph{not} be prime but the closed subscheme defined by $(w_0,\dots,w_h)$ in $\PP^n\times_k\PP^{N_m}$ coincides with $W_m$.

Set
\begin{equation}
 D_1:=\max_{0\leq i\leq N_1}\{D_{1,e_i}\},\quad D_2:=\max_{N_1+1\leq j\leq h}\{\deg_{X_{\sbullet}}\! w_j\},
\end{equation}
which does not depend on $m$.
By the Euler rule together with the Jacobian criterion in the affine case, we conclude that the singular locus $\Sing(\pi_m)\subseteq X\times_k\PP(R_m^{\vee})$ is defined by the determinants of certain $(n-\dim X+1)\times (n-\dim X+1)$-minors of the Jacobian matrix $\left(\frac{\partial w_i}{\partial X_j}\right)$, whose degrees in $X_{\sbullet}$ (resp., in $Y_{\sbullet}$) are all bounded from above by $(N_1+1)(D_1mN_m-1)+(n-\dim X)(D_2-1)$ (resp., by $N_1+1$).
We choose a positive constant $D'>0$ such that
\[
 (N_1+1)(D_1mN_m-1)+(n-\dim X)(D_2-1)\leq D'm^{\kappa(R_{\sbullet})+1}
\]
for all $m\geq 1$.
Let $\varphi_X(l)$ be as in (\ref{eqn:varphil}) and set
\begin{equation}
 P(m):=\varphi_X(\deg X(D'm^{\kappa(R_{\sbullet})+1})^{\dim X+1})\cdot (N_1+1).
\end{equation}
Then $\deg P=\dim X(\dim X+1)(\kappa(R_{\sbullet})+1)$.
Since $\pi_m(\Sing(\pi_m))$ is properly contained in $\PP(R_m^{\vee})$ due to Kleiman \cite[Corollaries~5 and 12]{Kleiman74}, we can apply Lemma~\ref{lem:degree} to this situation by setting
\[
 p=D'm^{\kappa(R_{\sbullet})+1}\quad\text{and}\quad q=N_1+1.
\]
Then we conclude that there exists a hypersurface $Z_m\subseteq\PP(R_m^{\vee})$ having degree less than or equal to $P(m)$ and containing $\pi_m(\Sing(\pi_m))$.
\end{proof}

By applying Theorem~\ref{thm:degBertini} to the image of $R_m$ via $\Hz(X,mA)\to\Hz(Y,mA|_Y)$, we have the following.

\begin{corollary}\label{cor:Bertini}
Under the same assumptions as in Theorem~\ref{thm:degBertini}, let $Y$ be a smooth closed subvariety of $X$, and let $y_1,\dots,y_q$ be closed points on $X$.
Then one can find a polynomial function $P(m)$ and hypersurfaces $Z_m\subseteq\PP(R_m^{\vee})$ for $m=1,2,\dots$ having the following two properties.
\begin{enumerate}
\item[\textup{(1)}]{$\deg P\leq\dim Y(\dim Y+1)(\kappa(R_{\sbullet})+1)+q$.}
\item[\textup{(2)}]{For every $m\geq 1$, the hypersurface $Z_m\subseteq\PP(R_m^{\vee})$ contains the set
\[
 \left\{H\in\PP(R_m^{\vee})\,\left|\,\begin{array}{l}\text{$\phi_m(Y)\subseteq H$, $\phi_m^{-1}(H)\cap Y$ is not smooth,}\\ \text{or $H$ contains one of $y_1,\dots,y_q$}\end{array}\right.\right\}
\]
and the homogeneous degree of $Z_m$ in $\PP(R_m^{\vee})$ is less than or equal to $P(m)$.}
\end{enumerate}
\end{corollary}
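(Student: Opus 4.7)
The plan is to restrict $R_{\sbullet}$ to $Y$, apply Theorem~\ref{thm:degBertini} on $Y$, pull the resulting hypersurface back to $\PP(R_m^{\vee})$, and multiply in linear factors corresponding to the points $y_1,\dots,y_q$.

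First I would set $V_m:=\Image(R_m\to\Hz(Y,mA|_Y))$ and $V_{\sbullet}:=\bigoplus_{m\geq 0}V_m$. Since restriction is a graded ring homomorphism, $V_{\sbullet}$ is a graded $k$-subalgebra of $\bigoplus_{m\geq 0}\Hz(Y,mA|_Y)$ belonging to $A|_Y$, and $R_{\sbullet}\twoheadrightarrow V_{\sbullet}$ is a graded surjection. I would verify the three hypotheses of Theorem~\ref{thm:degBertini} for $V_{\sbullet}$: the linear series $V_1=R_1|_Y$ is base-point free on $Y$ because $R_1$ is base-point free on $X\supseteq Y$; $V_{\sbullet}$ is generated by $V_1$ because $R_{\sbullet}$ is generated by $R_1$; and in positive characteristic, the morphism $\phi_m|_Y\colon Y\to\PP(R_m)$ factors as $\phi_m|_Y=\iota_m\circ\psi_m$ through the natural closed immersion $\iota_m\colon\PP(V_m)\hookrightarrow\PP(R_m)$, and since $\phi_m|_Y=\phi_m\circ(Y\hookrightarrow X)$ is unramified as a composition of unramified morphisms, the map $\psi_m\colon Y\to\PP(V_m)$ is unramified by the cancellation property. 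Moreover, $\kappa(V_{\sbullet})\leq\kappa(R_{\sbullet})$ since $V_{\sbullet}$ is a graded quotient of $R_{\sbullet}$.

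Applying Theorem~\ref{thm:degBertini} to $(Y,V_{\sbullet})$ yields a polynomial $P_0(m)$ with $\deg P_0\leq\dim Y(\dim Y+1)(\kappa(R_{\sbullet})+1)$ together with hypersurfaces $Z_m'\subseteq\PP(V_m^{\vee})$ of degree at most $P_0(m)$ that contain every $H'$ with $\psi_m(Y)\subseteq H'$ or $\psi_m^{-1}(H')$ non-smooth. The restriction $R_m\twoheadrightarrow V_m$ induces a linear rational projection $r\colon\PP(R_m^{\vee})\dashrightarrow\PP(V_m^{\vee})$ with indeterminacy locus $\PP(K_m)$, where $K_m:=\Ker(R_m\to V_m)$ consists of sections vanishing on $Y$. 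If $F$ is a homogeneous equation of $Z_m'$ of degree $\leq P_0(m)$, then (after choosing a basis of $R_m$ extending a lift of a basis of $V_m$, so that $r$ becomes coordinate projection) viewing $F$ as a polynomial in the full coordinates on $\PP(R_m^{\vee})$ defines a hypersurface $Z_m''\subseteq\PP(R_m^{\vee})$ of the same degree which set-theoretically contains $\PP(K_m)$ together with the closure of $r^{-1}(Z_m')$. For a section $s\in R_m\setminus\{0\}$ with associated hyperplane $H_s\in\PP(R_m^{\vee})$, one has $\phi_m(Y)\subseteq H_s\Longleftrightarrow s\in K_m$, and $\phi_m^{-1}(H_s)\cap Y=\div(s|_Y)$ is non-smooth $\Longleftrightarrow r([s])\in Z_m'$; hence $Z_m''$ contains all bad hyperplanes of the first two types.

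Finally, for each closed point $y_i\in X$, the condition $s(y_i)=0$ cuts out a proper linear hyperplane $\Lambda_i\subseteq\PP(R_m^{\vee})$, since $R_m$ is base-point free at $y_i$ (indeed $s_1^{\otimes m}\in R_m$ is nonvanishing at $y_i$ for any $s_1\in R_1$ with $s_1(y_i)\neq 0$). Setting $Z_m:=Z_m''\cup\Lambda_1\cup\dots\cup\Lambda_q$ and $P(m):=P_0(m)+q$ gives a hypersurface of degree at most $P(m)$ with $\deg P=\deg P_0\leq\dim Y(\dim Y+1)(\kappa(R_{\sbullet})+1)\leq\dim Y(\dim Y+1)(\kappa(R_{\sbullet})+1)+q$, as required. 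The only real subtlety I expect is checking unramifiedness of $\psi_m$ in positive characteristic and bookkeeping the indeterminacy locus $\PP(K_m)$, which is precisely what makes the ``$\phi_m(Y)\subseteq H$'' case automatic upon pullback; the rest is routine inheritance from Theorem~\ref{thm:degBertini}.
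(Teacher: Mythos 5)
Your proof is correct and takes essentially the same route as the paper, which dispenses with this corollary in a single sentence (``apply Theorem~\ref{thm:degBertini} to the image of $R_m$ via $\Hz(X,mA)\to\Hz(Y,mA|_Y)$''); you have simply spelled out the implicit details: the verification that $V_\sbullet$ inherits base-point freeness of $V_1$, generation in degree one, unramifiedness of $\psi_m$ (via cancellation), and $\kappa(V_\sbullet)\leq\kappa(R_\sbullet)$, together with the pullback of $Z_m'$ along the linear projection $\PP(R_m^\vee)\dashrightarrow\PP(V_m^\vee)$ and the extra $q$ linear factors $\Lambda_i$ for the prescribed points.
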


\section{Proofs}

In this section, we turn to the arithmetic case and give proofs of Theorem~A and Corollaries~B and C.
To prove Theorem~A, we use Lemmas~\ref{lem:CN}, \ref{lem:poschr}, and \ref{lem:ZhangMoriwaki}.

\begin{lemma}[\textup{Combinatorial Nullstellensatz \cite[Lemma 5.2]{Moriwaki95}, \cite[Theorem 1.2]{Alon99}}]\label{lem:CN}
Let $V$ be a finite-dimensional vector space over a field $k$, and let
\[
 u:V\to k
\]
be a nonzero polynomial function with maximal total degree $\deg u$.
Let $e_1,\dots,e_N$ be generators of $V$ over $k$, and let $S_1,\dots,S_N$ be subsets of $k$.
If $\Card(S_j)\geq \deg u+1$ for every $j$, then there exist $a_1\in S_1,\dots,a_N\in S_N$ such that
\[
 u(a_1e_1+\dots+a_Ne_N)\neq 0.
\]
\end{lemma}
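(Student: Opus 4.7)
The plan is to reduce the statement to the original form of the Combinatorial Nullstellensatz due to Alon, which handles polynomials in a polynomial ring $k[a_1,\dots,a_N]$. Using the generators $e_1,\dots,e_N$ of $V$, I would introduce the surjective $k$-linear map
\[
 \psi:k^N\to V,\qquad (a_1,\dots,a_N)\mapsto a_1e_1+\dots+a_Ne_N,
\]
and form the pullback polynomial
\[
 f(a_1,\dots,a_N):=u(\psi(a_1,\dots,a_N))\in k[a_1,\dots,a_N].
\]
Since pulling a polynomial function back through a linear map cannot raise total degree, one has $\deg f\leq\deg u$; in particular, the degree of $f$ in each variable $a_j$ is bounded by $\deg u$, which by hypothesis is strictly less than $\Card(S_j)$.

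The next step is to check that $f$ is nonzero as a polynomial in $k[a_1,\dots,a_N]$. Because $\psi$ is surjective, the dual map $\psi^{*}\colon V^{*}\hookrightarrow(k^N)^{*}$ is injective on linear forms, and since taking $\Sym$ of an injection of $k$-vector spaces produces an injection of graded algebras, the ring homomorphism $\Sym(V^{*})\hookrightarrow k[a_1,\dots,a_N]$ induced by $\psi$ is injective. As $u$ is a nonzero element of $\Sym(V^{*})$, its image $f=\psi^{*}u$ is nonzero.

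With $f$ a nonzero polynomial whose degree in each $a_j$ is strictly less than $\Card(S_j)$, Alon's Combinatorial Nullstellensatz \cite[Theorem~1.2]{Alon99} produces $(a_1,\dots,a_N)\in S_1\times\dots\times S_N$ with $f(a_1,\dots,a_N)\neq 0$, which is exactly $u(a_1e_1+\dots+a_Ne_N)\neq 0$. There is no real obstacle in this argument: the only point that requires care is the degree-bound and nonvanishing of the pullback, both of which follow formally from the injectivity of $\psi^{*}$ and the preservation of degree by $\Sym$ of a linear map.
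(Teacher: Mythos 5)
Your proof is correct and takes the natural route; since the paper only cites Moriwaki and Alon for this lemma rather than proving it, there is no in-paper proof to compare against, but the reduction you give — pull $u$ back along the surjection $\psi\colon k^N\to V$, observe that $\Sym(\psi^*)$ is injective so $f:=u\circ\psi$ stays nonzero of total degree at most $\deg u$, and apply the Combinatorial Nullstellensatz — is exactly the intended argument. One small imprecision worth tightening: Alon's Theorem~1.2 is stated in terms of a monomial $\prod a_j^{t_j}$ of top total degree $\deg f$ with nonzero coefficient and the condition $\Card(S_j)>t_j$, not in terms of partial degrees $\deg_{a_j}f$. To invoke it you should say: pick any monomial $\prod a_j^{t_j}$ of degree $\deg f$ with nonzero coefficient; then each $t_j\leq\deg f\leq\deg u<\Card(S_j)$, and Theorem~1.2 applies. (Alternatively, the form you actually use — nonzero $f$ with $\deg_{a_j}f<\Card(S_j)$ for all $j$ has a nonvanishing point on the grid — is Alon's Lemma~2.1, the more elementary version proved by induction on the number of variables, and you could cite that directly.) Either way the argument closes.
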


\begin{lemma}\label{lem:poschr}
Let $X$ be a projective arithmetic variety, let $A$ be a line bundle on $X$, and let $R_{\sbullet}$ be a graded linear series belonging to $A$.
Suppose that $R_1$ is base point free.
Let $y_1,\dots,y_l\in X$ be distinct closed points on $X$ such that $\chr(k(y_i))\neq 0$ for every $i$, and let $e_1^{(m)},\dots,e_{N_m}^{(m)}\in R_m$ be generators of the $\ZZ$-module $R_m$.
Set $F:=\prod_{\substack{\text{$p$: prime} \\ \exists i,\,p|\chr(k(y_i))}}p$.
Then, for every sufficiently large $m$, there exist integers $a_1,\dots,a_{N_m}$ such that $0\leq a_j<F$ for every $j$, and
\[
 (a_1+Fb_1)e_1^{(m)}(y_i)+\dots+(a_{N_m}+Fb_{N_m})e_{N_m}^{(m)}(y_i)\neq 0
\]
for every integer $b_1,\dots,b_{N_m}$ and for every $i$.
\end{lemma}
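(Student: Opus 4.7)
The plan is to decouple the problem prime by prime and then apply the combinatorial Nullstellensatz (Lemma~\ref{lem:CN}). Since $F$ is squarefree, the Chinese remainder theorem gives $\ZZ/F\ZZ \cong \prod_{p \mid F}\FF_p$, and since $p_i \mid F$ the value $(a_j+Fb_j)\,e_j^{(m)}(y_i) \in mA|_{y_i}$ depends only on $a_j \bmod p_i$ (as $mA|_{y_i}$ is a $k(y_i)$-vector space). Thus the conclusion of the lemma is equivalent to producing, for each prime $p \mid F$, a tuple $\bar a^{(p)} \in \FF_p^{N_m}$ such that $\sum_j \bar a_j^{(p)}\,\bar e_j^{(m)}(y_i) \neq 0$ in $k(y_i)$ for every $i$ in $I_p := \{i : \chr(k(y_i)) = p\}$; CRT then assembles these into an integer tuple $(a_1,\ldots,a_{N_m}) \in \{0,\ldots,F-1\}^{N_m}$ meeting all the required conditions.

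Fix $p \mid F$, and for each $i \in I_p$ set $r_i := [k(y_i):\FF_p]$ and consider the norm polynomial
\[
 u_i(a_1,\ldots,a_{N_m}) := N_{k(y_i)/\FF_p}\Bigl(\sum_j a_j\,\bar e_j^{(m)}(y_i)\Bigr) \in \FF_p[a_1,\ldots,a_{N_m}],
\]
of total degree $r_i$. Since the norm on a finite field vanishes only at zero, $u_i(\bar a) \neq 0$ is exactly the non-vanishing condition at $y_i$. Base-point-freeness of $R_1$ at $y_i$ passes to $R_m$ via $s \mapsto s^{\otimes m}$ and therefore guarantees that $u_i$ is not identically zero, so $u^{(p)} := \prod_{i \in I_p} u_i$ is a nonzero polynomial of total degree $D_p := \sum_{i \in I_p} r_i$. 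Applying Lemma~\ref{lem:CN} to $u^{(p)}$ with $V = R_m \otimes \FF_p$, generators $e_j^{(m)} \otimes 1$, and sets $S_j = \FF_p$ produces the desired $\bar a^{(p)}$ provided $p \geq D_p + 1$.

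The main obstacle is the case of a small prime $p$ with $p \leq D_p$, where Lemma~\ref{lem:CN} cannot be invoked as stated. I plan to handle it using the hypothesis that $m$ is sufficiently large in two stages. First, after fixing $e \in R_1$ with $e(y_i) \neq 0$, the renormalized images $V_m^{(i)}/e(y_i)^m \subseteq k(y_i)$ form an increasing sequence of $\FF_p$-subspaces closed under multiplication (because $R_\bullet$ is a graded ring), and therefore stabilize to the $\FF_p$-subfield $W_i \subseteq k(y_i)$ generated by $\{t(y_i)/e(y_i) : t \in R_1\}$; for $m$ large, replacing $N_{k(y_i)/\FF_p}$ by $N_{W_i/\FF_p}$ in the definition of $u_i$ lowers its degree to $s_i := [W_i:\FF_p] \leq r_i$ and relaxes the threshold to $p \geq \sum_{i \in I_p} s_i + 1$. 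Second, when even this weaker threshold is unmet, one appeals to the fact that for $m$ large $R_m$ contains sufficiently many products of $R_1$-sections to make the joint evaluation map $\bigoplus_i \alpha_i : \FF_p^{N_m} \to \bigoplus_{i \in I_p} W_i$ surjective as $\FF_p$-linear map, and then picks any preimage $\bar a^{(p)}$ of a tuple with all entries nonzero, bypassing the Nullstellensatz degree restriction.
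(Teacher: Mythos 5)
The paper's proof is far simpler and makes the combinatorial Nullstellensatz unnecessary for this lemma. It first establishes a Claim: for $m\gg1$ there is a single section $s\in R_m$ with $s(y_i)\neq 0$ for every $i$. This follows by taking the morphism $\phi:X\to\PP^{N_1}_{\ZZ}$ associated to $R_1$ and using the standard fact (Serre vanishing for $\mathcal{O}(m)$ on $\PP^{N_1}_{\ZZ}$) that
$\Hz(\PP^{N_1}_{\ZZ},\mathcal{O}(m))\to\bigoplus_i\mathcal{O}(m)(\phi(y_i))$
is surjective for $m\gg1$, then pulling back a section not vanishing at any $\phi(y_i)$. Writing $s=\sum_j c_j e_j^{(m)}$ with $c_j\in\ZZ$ and setting $a_j:=c_j\bmod F\in\{0,\dots,F-1\}$ finishes the argument, because $Fe_j^{(m)}(y_i)=0$ (since $\chr(k(y_i))\mid F$) implies $\sum_j(a_j+Fb_j)e_j^{(m)}(y_i)=s(y_i)\neq 0$ for any integers $b_j$. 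No prime-by-prime decomposition and no degree bookkeeping are needed.

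Your CRT reduction and the norm-polynomial setup are fine as far as they go, but there is a genuine gap in the small-prime case $p\leq D_p$, and this is the case you cannot wave away. Your first reduction (shrinking the target to the stable subfield $W_i$) may lower the threshold, but you admit it need not lower it below $p$. Your second fix invokes surjectivity of the joint evaluation map $\bigoplus_{i\in I_p}\alpha_i:R_m\otimes\FF_p\to\bigoplus_{i\in I_p}W_i$ as "a fact" that "one appeals to," but it is not a fact you have established; it does not follow from surjectivity of each $\alpha_i$ separately, and the informal justification ("$R_m$ contains sufficiently many products of $R_1$-sections") is not an argument. Establishing the existence of $\bar a$ with $\alpha_i(\bar a)\neq 0$ for all $i\in I_p$ is precisely the content of the paper's Claim (specialized to residue characteristic $p$), and once you prove that, the whole CRT/norm/Nullstellensatz scaffolding becomes redundant. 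As written, the proposal replaces the one nontrivial step by an unproved assertion, so it is incomplete. I would drop the Nullstellensatz apparatus here entirely and prove the single-section Claim directly; Lemma~\ref{lem:CN} is the right tool for Theorem~A itself (where degree control genuinely matters), not for this lemma.
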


\begin{proof}
First, we need the following claim.

\begin{claim}
For every sufficiently large $m$, there exists an $s\in R_m$ such that $s(y_i)\neq 0$ for every $i$.
\end{claim}

\begin{proof}
Let $\phi:X\to\PP_{\ZZ}^{N_1}$ be the morphism associated to $R_1$ such that $\phi^*X_j=e_j^{(1)}$ for every $j$, and let $\mathcal{O}(1)$ be the hyperplane line bundle on $\PP_{\ZZ}^{N_1}$.
Then, for every sufficiently large $m$, the homomorphism
\[
 \Hz(\PP_{\ZZ}^{N_1},\mathcal{O}(m))\to\bigoplus_{i}\mathcal{O}(m)(\phi(y_i))
\]
is surjective.
Let $t\in\Hz(\PP_{\ZZ}^{N_1},\mathcal{O}(m))$ be a section such that $t(\phi(y_i))\neq 0$ for every $i$.
Then $s:=\phi^*t$ has the desired property.
\end{proof}

Next, let $s\in R_m$ as above.
Since $Fe_j^{(m)}(y_i)=0$ for every $i,j$, we have that
\[
 (s+Ft)(y_i)=s(y_i)\neq 0
\]
for every $t\in R_m$ and for every $i$.
Thus we conclude the claim.
\end{proof}

\begin{lemma}[\textup{Zhang-Moriwaki \cite[Theorem~A and Corollary~B]{Moriwaki10}}]\label{lem:ZhangMoriwaki}
Under the same assumptions as in Theorem~A, take an $m_0\gg 1$, and fix $e_1,\dots,e_N\in R_{m_0}$ such that
\[
 \{x\in X_{\QQ}\,|\,e_1(x)=\dots=e_N(x)=0\}=\emptyset
\]
and such that $\|e_j\|_{m_0}<1$ for every $j$.
Then there exists a positive constant $C>0$ such that, for every sufficiently large $m$, one can find a $\ZZ$-basis $e_1^{(m)},\dots,e_{N_m}^{(m)}$ for $R_m$ such that
\[
 \max_i\left\{\|e_i^{(m)}\|_m\right\}\leq Cm^{(\dim X+2)(\dim X-1)}\left(\max_j\left\{\|e_j\|_{m_0}\right\}\right)^{m/m_0}.
\]
\end{lemma}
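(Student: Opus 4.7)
The plan is to apply the Zhang--Moriwaki theorem from \cite{Moriwaki10} essentially as a black box; the statement of Lemma~\ref{lem:ZhangMoriwaki} is tailored to that result. The main work is to verify that our hypotheses feed into Theorem~A and Corollary~B of loc.\ cit.\ and to track the polynomial coefficient through. Our multiplicative family $\|\cdot\|_{\sbullet}$, the sections $e_1,\dots,e_N\in R_{m_0}$ with $\|e_j\|_{m_0}<1$, and the assumption that they have no common zero on $X_{\QQ}$ constitute exactly the input that theorem requires.

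Concretely, one writes $m=qm_0+r$ with $0\le r<m_0$, fixes once and for all a $\ZZ$-basis $s_1,\dots,s_{N_r}$ of $R_r$ for each residue class $r$, and considers the products
\[
 e_{j_1}\otimes\cdots\otimes e_{j_q}\otimes s_l\;\in\; R_m,\qquad 1\le j_i\le N,\ 1\le l\le N_r.
\]
Multiplicativity of $\|\cdot\|_{\sbullet}$ immediately yields
\[
 \|e_{j_1}\otimes\cdots\otimes e_{j_q}\otimes s_l\|_m\;\leq\;\bigl(\max_j\|e_j\|_{m_0}\bigr)^{q}\cdot\max_l\|s_l\|_r,
\]
so modulo a bounded multiplicative constant these products already carry the correct exponential factor $\bigl(\max_j\|e_j\|_{m_0}\bigr)^{m/m_0}$.

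What remains is to upgrade this generating family to an honest $\ZZ$-basis of $R_m$, with the loss absorbed into the polynomial prefactor $m^{(\dim X+2)(\dim X-1)}$. Since $e_1,\dots,e_N$ have no common zero on $X_{\QQ}$, the submodule $M_m\subseteq R_m$ spanned by the above products has full $\QQ$-rank $N_m$. An effective Nullstellensatz on $X$ in the spirit of Lemma~\ref{lem:degree}, combined with the Hilbert-function growth $\dim_{\QQ}(R_m\otimes_{\ZZ}\QQ)=O(m^{\dim X})$, bounds the index $[R_m:M_m]$ polynomially in $m$. A successive-minima / Minkowski second-theorem argument then converts a generating family of controlled norm into a $\ZZ$-basis of controlled norm, with the cost of the conversion absorbed into the same polynomial factor.

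The main obstacle is to pin down the precise exponent $(\dim X+2)(\dim X-1)$: this requires a careful accounting of both the index $[R_m:M_m]$ and the amplification cost in Minkowski's theorem, and it is here that one genuinely leans on the calculations in \cite{Moriwaki10}. The exponential factor, by contrast, is essentially automatic from multiplicativity of $\|\cdot\|_{\sbullet}$, which is why Moriwaki's arguments give a bound of this particular shape.
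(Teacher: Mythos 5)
The paper does not prove this lemma --- it is stated with a direct attribution to \cite[Theorem~A and Corollary~B]{Moriwaki10} and invoked as an external result, with no internal argument. Since your proposal also treats Moriwaki's theorem as a black box and merely sketches the mechanism behind it, you are doing what the paper does, and on that level the two approaches agree.

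That said, one step of your sketch would fail as written. The claim that base-point freeness of $e_1,\dots,e_N$ over $\QQ$ implies that the products $e_{j_1}\otimes\cdots\otimes e_{j_q}\otimes s_l$ (with $s_l$ running over a $\ZZ$-basis of $R_r$, $0\le r<m_0$) span $R_m\otimes\QQ$ is unjustified: having no common zero on $X_{\QQ}$ does not make $e_1,\dots,e_N$ span $R_{m_0}\otimes\QQ$, and the $q$-fold products span only the image of $\Sym^q\langle e_1,\dots,e_N\rangle$, which can be a strict subspace of $R_{qm_0}\otimes\QQ$. What is true, by a Castelnuovo--Mumford/Koszul-type argument, is that there is an $m_1$ with $\langle e_1,\dots,e_N\rangle\cdot R_{m-m_0}\to R_m$ surjective after $\otimes\QQ$ for all $m\ge m_1$; this forces the tail factors $s_l$ to be taken from a fixed finite window of degrees around $m_1$, not from degrees $<m_0$, and extracting a polynomial bound on the index $[R_m:M_m]$ (and hence the exponent $(\dim X+2)(\dim X-1)$ through the reduction-to-a-basis step) is exactly the quantitative content you defer to \cite{Moriwaki10}. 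So the sketch has the right general shape but is not self-contained; the lemma stands in your proposal because it is being cited, not because of the outline.
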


\begin{proof}[Proof of Theorem~A]
Let $r:=[K:\QQ]$, and let $X(\CC)=X_1\cup\dots\cup X_r$ be the decomposition into connected components.
Let $R_{m,\alpha}$ be the image of $R_m\otimes_{\ZZ}\CC$ via $\Hz(X,A)\otimes_{\ZZ}\CC\to\Hz(X_{\alpha},A_{\CC}|_{X_{\alpha}})$, and let $\phi_{m,\alpha}:X_{\alpha}\to\PP_{\CC}^{M_m}$ be a morphism associated to $R_{m,\alpha}$, where we set $M_m:=\rk_{\ZZ} R_m/r$.
By Lemma~\ref{lem:ZhangMoriwaki}, there exist constants $C,Q$ with $C>0$ and $0<Q<1$ such that there exists a $\ZZ$-basis $e_1^{(m)},\dots,e_{rM_m}^{(m)}$ for $R_m$ consisting of the sections with supremum norms less than or equal to
\begin{equation}
 Cm^{(\dim X+2)(\dim X-1)}Q^m.
\end{equation}

For each $Y^j$, there exists a unique component $X_{\alpha(j)}$ that contains $Y^j$.
Suppose that $\chr(x_i)=0$ for $i=1,\dots,q_1$ and $\chr(x_i)\neq 0$ for $i=q_1+1,\dots,q=q_1+q_2$ and let $y_i$ be a closed point in $\overline{\{x_i\}}$.
By applying Corollary~\ref{cor:Bertini} to $X_{\alpha(j)}$, $Y^j$, $y_1,\dots,y_{q_1}$, and $R_{\sbullet,\alpha(j)}$, one can find a polynomial function $P_j(m)$ of degree less than or equal to $\dim Y^j(\dim Y^j-1)(\kappa(R_{\sbullet,\alpha(j)})+1)+q_1$ and hypersurfaces $Z_{m,j}\subseteq\PP(R_{m,\alpha(j)}^{\vee})$ defined by homogeneous polynomials $u_{m,j}$ of degree less than or equal to $P_j(m)$, respectively, such that $Z_{m,j}$ contains all the hyperplanes $H$ in $\PP(R_{m,\alpha(j)}^{\vee})$ such that $\phi_{m,\alpha(j)}(Y^j)\subseteq H$, $\phi_{m,\alpha(j)}^{-1}(H)\cap Y^j$ is not smooth, or $\phi_{m,\alpha(j)}^{-1}(H)$ contains one of $y_1,\dots,y_{q_1}$.
Set
\[
 u_{m,\alpha}:=\prod_{\alpha(j)=\alpha}u_{m,j}
\]
and consider the homogeneous polynomial function
\[
 u:R_m\otimes_{\ZZ}\CC\xrightarrow{\sim}\bigoplus_{\alpha=1}^rR_{m,\alpha}\xrightarrow{\prod_{\alpha}u_{m,\alpha}}\CC
\]
of degree less than or equal to
\begin{equation}
 P(m):=P_1(m)+\dots +P_p(m).
\end{equation}

Set $F:=\prod_{\substack{\text{$q$: prime} \\ \exists i,\,q|\chr(y_i)}}q.$
Since $e_1^{(m)},\dots,e_{rM_m}^{(m)}\in R_m$ generate $R_m\otimes_{\ZZ}\CC$ over $\CC$, one can find integers $a_1,\dots,a_{rM_m}$ and $b_1,\dots,b_{rM_m}$ such that $0\leq a_i< F$ for every $i$, $0\leq b_j\leq P(m)$ for every $j$, and
\[
 u((a_1+Fb_1)e_1^{(m)}+\dots+(a_{rM_m}+Fb_{rM_m})e_{rM_m}^{(m)})\neq 0
\]
by use of Lemmas~\ref{lem:poschr} and \ref{lem:CN}.
Hence, for each $m\gg 1$, there exists a section $t_m\in R_m$ such that $t_m|_{X_{\alpha}}$ is not contained in any of $Z_{m,j}$ and
\[
 \|t_m\|_m\leq CFrm^{(\dim X+2)(\dim X-1)}M_m(1+P(m))Q^m.
\]
Since the right-hand side tends to zero as $m\to\infty$, we conclude the proof.
\end{proof}

Corollary~B is a direct consequence of Theorem~A.

\begin{proof}[Proof of Corollary~C]
We can take $a_0\gg 1$ such that $\Bs\mathrm{F}^{0+}(X,a_0\overline{L})=\SBs^{0+}(\overline{L})$.
Let $\mathfrak{b}^{0+}(a_0\overline{L}):=\Image(\mathrm{F}^{0+}(X,a_0\overline{L})\otimes_{\ZZ}(-a_0L)\to\mathcal{O}_X)$, let $\mu:X'\to X$ be a blowup such that $X'$ is generically smooth and such that $\mu^{-1}\mathfrak{b}^{0+}(a_0\overline{L})\cdot\mathcal{O}_{X'}$ is Cartier, and let $E$ be an effective Cartier divisor on $X'$ such that $\mathcal{O}_{X'}(-E)=\mu^{-1}\mathfrak{b}^{0+}(a_0\overline{L})\cdot\mathcal{O}_{X'}$.
We can assume that $\mu$ is isomorphic over $X\setminus\SBs^{0+}(\overline{L})$ (see \cite{Hironaka64}).
Set $x_i':=\mu^{-1}(x_i)\in X'\setminus E$ for $i=1,\dots,q$.
Let $B:=\mathcal{O}_{X'}(E)$ and let $1_B$ be the canonical section.

\begin{lemma}\label{lem:YuanZhang}
\begin{enumerate}
\item[\textup{(1)}] We can endow $B$ with a continuous Hermitian metric $|\cdot|_{\overline{B}}$ such that
\[
 |1_B|_{\overline{B}}(x)=\max_{\substack{e\in\Hz(X,a_0L) \\ 0<\|e\|_{\sup}^{(a_0)}<1}}\left\{\frac{|e|_{a_0\overline{L}}(\mu(x))}{\|e\|_{\sup}^{(a_0)}}\right\}\leq 1
\]
for all $x\in X'(\CC)$.
\item[\textup{(2)}] We set $\overline{B}:=(B,|\cdot|_{\overline{B}})$ and $\overline{A}:=a_0\mu^*\overline{L}-\overline{B}$.
Then $\overline{A}$ is a continuous Hermitian line bundle on $X'$ such that
\[
 \Bs\mathrm{F}^{0+}(X',\overline{A})=\emptyset\quad\text{and}\quad c_1(\overline{A})\geq 0
\]
as a current.
\end{enumerate}
\end{lemma}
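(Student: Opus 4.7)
The plan is to work in a local analytic chart of $X'(\CC)$ and reduce the defining formula to a supremum of holomorphic data, exploiting the identity $\mu^*\mathfrak{b}^{0+}(a_0\overline{L})\cdot\mathcal{O}_{X'}(E)=\mathcal{O}_{X'}$. First, since $|e|_{a_0\overline{L}}(\mu(x))/\|e\|_{\sup}^{(a_0)}$ is homogeneous of degree zero in $e$, the prescribed maximum over $\{e\in\mathrm{F}^{0+}(X,a_0\overline{L}):0<\|e\|_{\sup}<1\}$ coincides, by scale invariance and density of the lattice, with the maximum of a continuous function on the compact unit sphere of the finite-dimensional real vector space $\mathrm{F}^{0+}(X,a_0\overline{L})\otimes_{\ZZ}\RR$. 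This defines a function $|1_B|_{\overline{B}}:X'(\CC)\to[0,1]$, the bound $\leq 1$ being immediate from the definition of the supremum norm.

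To verify that this prescription extends to a continuous positive Hermitian metric on $B$, I would pick a local analytic chart $U\subseteq X'(\CC)$ on which $B$ admits a frame $\sigma$ with $1_B=f\sigma$ for a local equation $f$ of $E$, and on which $\mu^*L$ admits a frame $\tau$ with $|\tau|_{\mu^*\overline{L}}=e^{-\varphi_L}$. Writing $\mu^*e=g_e\tau^{a_0}$, the identity $\mu^*\mathfrak{b}^{0+}\cdot\mathcal{O}_{X'}(E)=\mathcal{O}_{X'}$ permits a factorization $g_e=f\cdot h_e$ with $h_e\in\mathcal{O}_{X'}(U)$ holomorphic, and the collection $\{h_e\}$ has no common zero on $U$. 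A direct computation then yields
\[
|1_B|_{\overline{B}}(x)=|f(x)|\,e^{-a_0\varphi_L(x)}\,\psi(x),\qquad\psi(x):=\sup_{e}\frac{|h_e(x)|}{\|e\|_{\sup}^{(a_0)}},
\]
so $|\sigma|_{\overline{B}}(x)=|1_B|_{\overline{B}}(x)/|f(x)|=e^{-a_0\varphi_L(x)}\psi(x)$. The function $\psi(x)$ is precisely the operator norm of the linear evaluation $e\mapsto h_e(x)$ on the finite-dimensional normed space $(\mathrm{F}^{0+}(X,a_0\overline{L})\otimes\RR,\|\cdot\|_{\sup})$; as the underlying linear map varies continuously in $x$, its operator norm is continuous in $x$, giving both the continuity and the strict positivity of $|\sigma|_{\overline{B}}$.

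For part~(2), continuity of $\overline{A}=a_0\mu^*\overline{L}-\overline{B}$ is immediate from the continuity of $\overline{L}$ and $\overline{B}$. For $\Bs\mathrm{F}^{0+}(X',\overline{A})=\emptyset$, each $e\in\mathrm{F}^{0+}(X,a_0\overline{L})$ with $\|e\|_{\sup}<1$ yields $e'_e:=\mu^*e/1_B\in H^0(X',A)$ whose pointwise norm equals $|e|_{a_0\overline{L}}(\mu(x))/|1_B|_{\overline{B}}(x)\leq\|e\|_{\sup}^{(a_0)}<1$ by the very defining property of $|1_B|_{\overline{B}}$, so $e'_e\in\mathrm{F}^{0+}(X',\overline{A})$; the no-common-zero property of $\{h_e\}$ then gives that $\{e'_e\}$ has no common zero on $X'$. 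Finally, in the frame $\tau^{a_0}\otimes\sigma^{-1}$ for $A$, the local weight of $\overline{A}$ simplifies to $\log\psi$, and since $\psi$ is an upper envelope of moduli of holomorphic functions scaled by positive constants, $\log\psi$ is plurisubharmonic, yielding $c_1(\overline{A})=\mathrm{dd}^c\log\psi\geq 0$ as a current. The main obstacle is verifying the continuity of $\psi$: a pointwise supremum of continuous functions is only lower semi-continuous in general, so the reformulation of $\psi$ as the operator norm of a continuously varying linear map on a finite-dimensional normed space is essential for upgrading to continuity.
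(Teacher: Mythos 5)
Your overall route tracks the paper's closely: localize $B$ and $\mu^*L$, use $\mu^*\mathfrak{b}^{0+}(a_0\overline{L})\cdot\mathcal{O}_{X'}(E)=\mathcal{O}_{X'}$ to factor $\mu^*e = f\,h_e\,\tau^{a_0}$ with the $h_e$ holomorphic and without common zero, read off continuity and strict positivity of the metric from this, express the local weight of $\overline{A}$ as an upper envelope of $\log|h_e|$ plus constants to get $c_1(\overline{A})\geq 0$, and prove $\Bs\mathrm{F}^{0+}(X',\overline{A})=\emptyset$ via the pointwise bound $|\mu^*e/1_B|\leq\|e\|_{\sup}^{(a_0)}$. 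Passing to the frame $\tau^{a_0}\otimes\sigma^{-1}$ so that the weight is directly $\log\psi$ is a mild streamlining of the paper's computation of $-\log|\varepsilon_{j_0}|^2_{\overline{A}}$; the content is the same.

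There is, however, one genuine error. You assert that the maximum in the lemma, taken over lattice points $e\in\Hz(X,a_0L)$ with $0<\|e\|_{\sup}^{(a_0)}<1$, ``coincides, by scale invariance and density of the lattice, with the maximum of a continuous function on the compact unit sphere'' of $\mathrm{F}^{0+}(X,a_0\overline{L})\otimes_{\ZZ}\RR$, and you then identify $\psi$ with an operator norm. This is not correct. The set $\{e\in\Hz(X,a_0L)\,:\,0<\|e\|_{\sup}^{(a_0)}<1\}$ is the intersection of a lattice with a bounded open set and is therefore \emph{finite}; after normalizing, these sections give a finite, and in particular not dense, subset of the unit sphere. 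The operator norm is a supremum over the entire sphere and will in general strictly exceed the finite maximum, so the metric you build does not agree with the lemma's prescription. The correct observation is also the simpler one, and is exactly what the paper uses: since the index set is finite, $\psi$ is a maximum of finitely many continuous functions, hence continuous, and $\log\psi$ is a maximum of finitely many plurisubharmonic functions, hence plurisubharmonic. No operator-norm reformulation is needed, and the lower-semicontinuity concern you raise at the end is a non-issue once finiteness is recorded.

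A minor point worth adding: the inequality $|e'_e|_{\overline{A}}(x)=|e|_{a_0\overline{L}}(\mu(x))/|1_B|_{\overline{B}}(x)\leq\|e\|_{\sup}^{(a_0)}$ comes from a quotient formula that only makes sense for $x\notin E(\CC)$; to conclude $\|e'_e\|_{\sup}<1$ and hence $e'_e\in\mathrm{F}^{0+}(X',\overline{A})$ you should note, as the paper does, that the bound extends across $E(\CC)$ by continuity of $|e'_e|_{\overline{A}}$.
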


\begin{proof}
Set $\{e\in\Hz(X,a_0L)\setminus\{0\}\,|\,\|e\|_{\sup}^{(a_0)}<1\}=\{e_1,\dots,e_N\}$.

(1): We choose an open covering $\{U_{\nu}\}$ of $X'(\CC)$ such that $a_0\mu^*L_{\CC}|_{U_{\nu}}$ is trivial with local frame $\eta_{\nu}$, and $E_{\CC}\cap U_{\nu}$ is defined by a local equation $g_{\nu}$.
Then we can write $\mu^*e_j=f_{j,\nu}\cdot g_{\nu}\cdot\eta_{\nu}$ on $U_{\nu}$, where $f_{1,\nu},\dots,f_{N,\nu}$ are holomorphic functions on $U_{\nu}$ satisfying $\{x\in U_{\nu}\,|\,f_{1,\nu}(x)=\dots=f_{N,\nu}(x)=0\}=\emptyset$.
Since
\[
 \max_j\left\{\frac{|e_j|_{a_0\overline{L}}(\mu(x))}{\|e_j\|_{\sup}^{(a_0)}}\right\}=\max_j\left\{\frac{|f_{j,\nu}(x)|}{\|e_j\|_{\sup}^{(a_0)}}\right\}\cdot |\eta_{\nu}|_{a_0\mu^*\overline{L}}(x)\cdot |g_{\nu}(x)|
\]
on $x\in U_{\nu}$, we have (1).

(2): For each $x_0\in X'(\CC)$, we take indices $\nu$ and $j_0$ such that $x_0\in U_{\nu}$ and $f_{j_0,\nu}(x_0)\neq 0$.
Let $\varepsilon_j$ be the section of $A$ such that $\mu^*e_j=\varepsilon_j\otimes 1_B$, and set $h_{j,\nu}:=f_{j,\nu}/f_{j_0,\nu}$.
Then
\[
 -\log|\varepsilon_{j_0}|_{\overline{A}}^2(x)=\max_j\left\{\log|h_{j,\nu}(x)|^2-\log\left(\|e_j\|_{\sup}^{(a_0)}\right)^2\right\}
\]
is plurisubharmonic near $x_0$.

We claim that $\|\varepsilon_j\|_{\sup}=\|e_j\|_{\sup}^{(a_0)}$, so that $\varepsilon_j\in\mathrm{F}^{0+}(X',\overline{A})$.
The inequality $\|\varepsilon_j\|_{\sup}\geq\|e_j\|_{\sup}^{(a_0)}$ is clear.
Since
\[
 |\varepsilon_j|_{\overline{A}}(x)=|e_j|_{a_0\overline{L}}(\mu(x))\cdot\min_i\left\{\frac{\|e_i\|_{\sup}^{(a_0)}}{|e_i|_{a_0\overline{L}}(\mu(x))}\right\}\leq \|e_j\|_{\sup}^{(a_0)}
\]
for all $x\in (X'\setminus E)(\CC)$, we have $\|\varepsilon_j\|_{\sup}=\|e_j\|_{\sup}^{(a_0)}$.
This means that $\Bs\mathrm{F}^{0+}(X',\overline{A})=\emptyset$.
\end{proof}

We apply Corollary~B to $\overline{A}$ and we can find an $m\gg 1$ and a $\sigma\in\Hz(X',mA)$ such that $\div(\sigma)_{\QQ}$ is smooth, $\sigma(x_i')\neq 0$ for every $i$, and $\|\sigma\|_{\sup}<1$.
Since $X$ is normal, there exists an $s\in\Hz(X,ma_0L)$ such that $\mu^*s=\sigma\otimes 1_B^{\otimes m}$.
Since $\mu$ is isomorphic over $X\setminus\SBs^{0+}(\overline{L})$, $s$ has the desired properties.
\end{proof}

\bibliography{ikoma}
\bibliographystyle{plain}

\end{document}